\newtheorem{thm}{Theorem}[section]
\newtheorem{cor}[thm]{Corollary}
\newtheorem{lem}[thm]{Lemma}
\newtheorem{prop}[thm]{Proposition}
\newtheorem{defn}[thm]{Definition}
\newtheorem*{thm*}{Theorem}
\newcommand{\U}{\mathcal{U}}
\newcommand{\N}{\mathbb{N}}
\newcommand{\Z}{\mathbb{Z}}
\newcommand{\V}{\mathcal{V}}
\newcommand{\bN}{\beta\mathbb{N}}
\begin{document}

\begin{center}
\uppercase{\bf Partition Regularity of Nonlinear Polynomials: a Nonstandard Approach}
\vskip 20pt
{\bf Lorenzo Luperi Baglini\footnote{Supported by grant P25311-N25 of the Austrian Science Fund FWF.}}\\
Department of Mathematics, University of Vienna, 1090 Vienna, Austria.\\
{\tt lorenzo.luperi.baglini@univie.ac.at}\\
\end{center}

\begin{abstract}
\noindent In 2011, Neil Hindman proved that for every natural number $n,m$ the polynomial
\begin{center} $\sum\limits_{i=1}^{n} x_{i}-\prod\limits_{j=1}^{m} y_{j}$ \end{center}
has monochromatic solutions for every finite coloration of $\N$. We want to generalize this result to two classes of nonlinear polynomials. The first class consists of polynomials $P(x_{1},...,x_{n},y_{1},...,y_{m})$ of the following kind:
\begin{center} $P(x_{1},...,x_{n},y_{1},...,y_{m})=\sum\limits_{i=1}^{n}a_{i}x_{i}M_{i}(y_{1},...,y_{m})$, \end{center}
where $n,m$ are natural numbers, $\sum\limits_{i=1}^{n}a_{i}x_{i}$ has monochromatic solutions for every finite coloration of $\N$ and the degree of each variable $y_{1},...,y_{m}$ in $M_{i}(y_{1},...,y_{m})$ is at most one. An example of such a polynomial is
\begin{center} $x_{1}y_{1}+x_{2}y_{1}y_{2}-x_{3}$.\end{center}
The second class of polynomials generalizing Hindman's result is more complicated to describe; its particularity is that the degree of some of the involved variables can be greater than one.\\
The technique that we use relies on an approach to ultrafilters based on Nonstandard Analysis. Perhaps, the most interesting aspect of this technique is that, by carefully chosing the appropriate nonstandard setting, the proof of the main results can be obtained by very simple algebraic considerations. 
\end{abstract}

\section{Introduction}
We say that a polynomial $P(x_{1},...,x_{n})$ is partition regular on $\N=\{1,2,...\}$ if whenever the natural numbers are finitely colored there is a monochromatic solution to the equation $P(x_{1},...,x_{n})=0$. The problem of determining which polynomials are partition regular has been studied since Issai Schur's work \cite{rif20}, and the linear case was settled by Richard Rado in \cite{rif17}:
\begin{thm}[Rado]\label{Rado}Let $P(x_{1},...,x_{n})= \sum_{i=1}^{n}a_{i}x_{i}$ be a linear polynomial with nonzero coefficients. The following conditions are equivalent:
\begin{enumerate}
	\item $P(x_{1},...,x_{n})$ is partition regular on $\N$;
	\item there is a nonempy subset $J$ of $\{1,...,n\}$ such that $\sum\limits_{j\in J}a_{j}=0$.
\end{enumerate}
\end{thm}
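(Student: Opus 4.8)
The plan is to establish the two implications separately. The implication $(2)\Rightarrow(1)$ is the Ramsey-theoretic heart of the statement, while $(1)\Rightarrow(2)$ only requires exhibiting a suitable bad finite coloring.

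For $(1)\Rightarrow(2)$ I would argue by contraposition. Suppose that no nonempty $J\subseteq\{1,\dots,n\}$ satisfies $\sum_{j\in J}a_j=0$, and for each prime $p$ consider the finite coloring $\chi_p\colon\N\to\{1,\dots,p-1\}$ sending $m$ to its last nonzero digit in base $p$, that is, $\chi_p(m)=\bigl(m/p^{v_p(m)}\bigr)\bmod p$ with $v_p$ the $p$-adic valuation. If $(1)$ held, each $\chi_p$ would admit a monochromatic solution $x_1,\dots,x_n$ of some color $d\neq 0$; setting $k=\min_i v_p(x_i)$ and $J_p=\{i:v_p(x_i)=k\}\neq\emptyset$ and reducing $\sum_i a_i x_i=0$ modulo $p^{k+1}$ gives $d\sum_{i\in J_p}a_i\equiv 0\pmod p$, hence $p\mid\sum_{i\in J_p}a_i$. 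As there are only finitely many possible sets $J_p$ but infinitely many primes, some fixed nonempty $J$ works for infinitely many $p$, forcing $\sum_{j\in J}a_j=0$ and contradicting the assumption.

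For $(2)\Rightarrow(1)$ the strategy is to look for the monochromatic solution inside a highly structured monochromatic set. Write $J=\{1,\dots,\ell\}$ with $\sum_{i=1}^{\ell}a_i=0$ and put $b=\sum_{i>\ell}a_i$. When $\gcd(a_i:i\le\ell)$ divides $b$ (in particular when $J=\{1,\dots,n\}$), van der Waerden's theorem already suffices — or rather Brauer's refinement, which produces a monochromatic arithmetic progression together with its common difference $d$: set $x_i=d$ for $i>\ell$ and $x_i=a+c_i d$ for $i\le\ell$, so that $\sum_i a_i x_i=d\bigl(\sum_{i\le\ell}a_i c_i+b\bigr)$, and the zero relation $\sum_{i\le\ell}a_i=0$ (which absorbs the term in $a$ and provides the slack needed to control signs) lets one solve $\sum_{i\le\ell}a_i c_i=-b$ for bounded nonnegative $c_i$ by B\'ezout. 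For general coefficients one instead works inside a monochromatic $(m,p,c)$-set in the sense of Deuber — these are partition regular, a result that subsumes the theorems of Schur, van der Waerden, Brauer and Folkman — since such a configuration (all suitably bounded nonnegative integer combinations of generators $y_1,\dots,y_m$) can accommodate coefficient behaviour, such as one variable being forced to equal twice a difference of two others, that a single arithmetic progression cannot.

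The step I expect to be the real obstacle is exactly this last one: arranging the number-theoretic bookkeeping so that the linear system prescribing the generators of the monochromatic configuration is solvable with admissible coefficients, and, upstream of that, recognizing that plain van der Waerden does not always suffice and that one must invoke the full strength of Deuber's $(m,p,c)$-sets. It is worth remarking that the nonstandard, ultrafilter-based approach developed in the remainder of the paper offers an alternative and more uniform route to producing monochromatic solutions of this kind.
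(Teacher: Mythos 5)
The paper never proves this statement: Theorem \ref{Rado} is quoted as classical background with a pointer to Rado's article \cite{rif17}, so there is no in-paper argument to compare yours against. Judged on its own, your direction $(1)\Rightarrow(2)$ is the standard necessity argument and is essentially complete: the last-nonzero-digit colorings $\chi_{p}$, the reduction of $\sum_{i}a_{i}x_{i}=0$ modulo $p^{k+1}$ giving $p\mid\sum_{i\in J_{p}}a_{i}$, and the pigeonhole over the finitely many candidate subsets $J$ across infinitely many primes are all correct.

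The gap is exactly where you flag it, in $(2)\Rightarrow(1)$. You rightly note that the substitution $x_{i}=d$ for $i\notin J$, $x_{i}=a+c_{i}d$ for $i\in J$ inside a Brauer configuration only works when $g=\gcd(a_{i}:i\in J)$ divides $b=\sum_{i\notin J}a_{i}$ (it fails, e.g., for $2x_{1}-2x_{2}+3x_{3}=0$, where $J=\{1,2\}$, $g=2$, $b=3$), but for general coefficients you only gesture at Deuber's $(m,p,c)$-sets without exhibiting the actual assignment of the $x_{i}$ to elements of such a set --- and that assignment is the entire content of the implication, so as written the sufficiency direction is an announcement rather than a proof. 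It is also heavier machinery than a single equation requires. The standard repair stays at the level of Brauer's theorem, strengthened (by the same colour-focusing induction) to produce $a,d$ with $\{gd\}\cup\{a+\lambda d:\ |\lambda|\le L\}$ monochromatic: set $x_{i}=gd$ for $i\notin J$ and $x_{i}=a+\lambda_{i}d$ for $i\in J$, where $\sum_{i\in J}a_{i}\lambda_{i}=-gb$ is solvable in integers bounded by some $L$ depending only on the coefficients, because $g$ trivially divides $gb$; the relation $\sum_{i\in J}a_{i}=0$ kills the term in $a$ as in your computation. In the example this yields the configuration $\{a,\ a+3d,\ 2d\}$. If you prefer the Deuber route, you must at least specify the parameters $(m,p,c)$ and write out the linear substitution; alternatively, the idempotent-ultrafilter machinery of Proposition \ref{idultragen}, as noted in the paper with reference to \cite{rif84}, gives yet another proof of (special cases of) the sufficiency direction.
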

In his work Rado also characterized the partition regular finite systems of linear equations. Since then, one of the main topics in this field has been the study of infinite systems of linear equations (for a general background on many notions related to this subject see, e.g., \cite{rif79}). From our point of view, one other interesting question (which has also been approached, e.g., in \cite{rif4}, \cite{rif6}) is: which nonlinear polynomials are partition regular?\\
To precisely formalize the problem, we recall the following definitions:
\begin{defn} A polynomial $P(x_{1},...,x_{n})$ is 
\begin{itemize}
	\item {\bfseries partition regular} $($on $\N)$ if for every natural number $r$, for every partition $\N=\bigcup\limits_{i=1}^{r}A_{i}$, there is an index $j\leq r$ and natural numbers $a_{1},...,a_{n}\in A_{j}$ such that $P(a_{1},...,a_{n})=0$;
	\item {\bfseries injectively partition regular}\footnote{Neil Hindman and Imre Leader proved in \cite{rif11} that every linear partition regular polynomial that has an injective solution is injectively partition regular; see also Section 2.1.} $($on $\N)$ if for every natural number $r$, for every partition $\N=\bigcup\limits_{i=1}^{r}A_{i}$, there is an index $j\leq r$ and mutually distinct natural numbers $a_{1},...,a_{n}\in A_{j}$ such that $P(a_{1},...,a_{n})=0$.
	\end{itemize}
\end{defn}
While the linear case is settled, very little is known in the nonlinear case. One of the few exceptions is the multiplicative analogue of Rado's Theorem, that can be deduced from Theorem \ref{Rado} by considering the map $exp(n)=2^{n}$:
\begin{thm}\label{RadoMoltiplicativo} Let $n,m\geq 1$, $a_{1},...,a_{n},b_{1},...,b_{m}> 0$ be natural numbers, and 
\begin{center} $P(x_{1},...,x_{n},y_{1},...,y_{m})= \prod\limits_{i=1}^{n} x_{i}^{a_{i}}-\prod\limits_{j=1}^{m}y_{j}^{b_{j}}$. \end{center}
The following two conditions are equivalent:
\begin{enumerate}
	\item $P(x_{1},...,x_{n},y_{1},...,y_{m})$ is partition regular;
	\item there are two nonempty subsets $I_{1}\subseteq \{1,...,n\}$ and $I_{2}\subseteq \{1,...,m\}$ such that $\sum\limits_{i\in I_{1}}a_{i}=\sum\limits_{j\in I_{2}}b_{j}$.
\end{enumerate}
\end{thm}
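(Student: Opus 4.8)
The plan is to transfer the equation to the additive world through the exponential map $E\colon k\mapsto 2^{k}$ and its left inverse $\Omega$, the totally additive function counting the prime factors of a natural number with multiplicity (so $\Omega(2^{k})=k$ and $\Omega(xy)=\Omega(x)+\Omega(y)$). Associate to $P$ the linear polynomial
\[
Q(u_{1},\dots,u_{n},v_{1},\dots,v_{m})\;=\;\sum_{i=1}^{n}a_{i}u_{i}-\sum_{j=1}^{m}b_{j}v_{j}.
\]
Then $E$ is an injective homomorphism $(\N,+)\to(\N,\cdot)$ carrying a zero of $Q$, via $x_{i}=2^{u_{i}}$ and $y_{j}=2^{v_{j}}$, to a zero of $P$, since $\prod_{i}x_{i}^{a_{i}}=2^{\sum_{i}a_{i}u_{i}}$ and similarly for the $y_{j}$; and $\Omega$ carries a zero of $P$ to a zero of $Q$, since applying $\Omega$ to $\prod_{i}x_{i}^{a_{i}}=\prod_{j}y_{j}^{b_{j}}$ gives $\sum_{i}a_{i}\Omega(x_{i})=\sum_{j}b_{j}\Omega(y_{j})$.

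The first lemma is purely bookkeeping: condition (2) is exactly Rado's criterion, part (2) of Theorem~\ref{Rado}, applied to $Q$. The coefficients of $Q$ are $a_{1},\dots,a_{n},-b_{1},\dots,-b_{m}$, all nonzero; because every $a_{i}$ and $b_{j}$ is positive, a nonempty set of these coefficients can sum to $0$ only by meeting both the block of the $a_{i}$ and the block of the $-b_{j}$ in nonempty index sets $I_{1}\subseteq\{1,\dots,n\}$ and $I_{2}\subseteq\{1,\dots,m\}$ with $\sum_{i\in I_{1}}a_{i}=\sum_{j\in I_{2}}b_{j}$, and conversely such $I_{1},I_{2}$ give a zero-sum set. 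So by Theorem~\ref{Rado}, (2) holds if and only if $Q$ is partition regular on $\N$.

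Granting this, (2)$\Rightarrow$(1) runs as follows: given a finite colouring $\N=\bigcup_{i\le r}A_{i}$, pull it back along $E$ to the colouring $B_{i}:=\{k\in\N:2^{k}\in A_{i}\}$ of $\N$, take a monochromatic zero $u_{1},\dots,v_{m}\in B_{j}$ of $Q$, and push it forward to $x_{i}:=2^{u_{i}}$, $y_{j}:=2^{v_{j}}$, all lying in $A_{j}$, satisfying $P=0$, and all $\ge 2$ since $u_{i},v_{j}\ge 1$. For (1)$\Rightarrow$(2) I argue by contraposition: if (2) fails then $Q$ violates Rado's criterion and so is not partition regular, so fix a finite colouring $d\colon\N\to\{1,\dots,r\}$ with no monochromatic zero of $Q$; then $c:=d\circ\Omega$, well defined on $\N\setminus\{1\}$ because $\Omega(x)\ge 1$ exactly for $x\ge 2$ and extended by $c(1):=1$, is a finite colouring of $\N$ for which $P$ has no monochromatic zero with all coordinates $\ge 2$ — such a zero would yield $\Omega(x_{1}),\dots,\Omega(y_{m})\in\N$ of a single $d$-colour with $Q(\Omega(x_{1}),\dots,\Omega(y_{m}))=0$. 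Hence $P$ is not partition regular.

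The step I expect to be the crux is this reverse transfer. The obvious map $\log_{2}$ is not $\N$-valued; a single $p$-adic valuation is blind to numbers coprime to $p$; and the fully faithful choice, the factorisation isomorphism of $(\N,\cdot)$ with the free commutative monoid on the primes, sends the unit $1$ to the zero element and so reinstates the trivial zero $x_{i}=y_{j}=1$ of $P$. (Without discarding that zero, \emph{every} such $P$ is partition regular, so both the intended statement and the argument above must read ``partition regular'' with the usual nontriviality convention that the witnessing tuple lies in $\N\setminus\{1\}$ — the multiplicative counterpart of working over $\{1,2,\dots\}$ rather than $\{0,1,2,\dots\}$ in Rado's theorem.) The function $\Omega$ dodges all three issues simultaneously, and the two halves fit together because the direct implication already produces solutions with every coordinate $\ge 2$.
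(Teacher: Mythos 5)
Your proof is correct and follows exactly the route the paper indicates, since the paper offers no detailed argument beyond the remark that the theorem ``can be deduced from Theorem \ref{Rado} by considering the map $exp(n)=2^{n}$'': you work out the forward transfer along $k\mapsto 2^{k}$ and supply the reverse transfer along $\Omega$ that the converse direction needs. Your observation about the trivial solution $x_{i}=y_{j}=1$ is well taken --- as literally stated the theorem needs the nontriviality convention you impose (witnesses in $\N\setminus\{1\}$), and under that reading both halves of your argument are complete.
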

As far as we know, perhaps the most interesting result regarding the partition regularity of nonlinear polynomials is the following: 
\begin{thm}[Hindman]\label{ConsHind} For every natural numbers $n,m\geq 1$, with $n+m\geq 3$, the nonlinear polynomial $$\sum\limits_{i=1}^{n}x_{i}-\prod\limits_{j=1}^{m}y_{j}$$ is injectively partition regular. \end{thm}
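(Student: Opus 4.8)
The plan is to work in the Stone–Čech compactification $\bN$, exploiting the standard dictionary between monochromatic solutions and algebra in the semigroup of ultrafilters. Recall that a polynomial $P$ is partition regular iff for every finite partition some cell contains a solution; equivalently, there is an ultrafilter $\mathcal{U}\in\bN$ such that \emph{every} $A\in\mathcal{U}$ contains a solution of $P=0$ (this is the usual compactness-plus-pigeonhole reformulation). So I would aim to produce, purely by algebraic manipulation inside $(\bN,+)$ and $(\bN,\cdot)$, an ultrafilter witnessing a monochromatic solution of $\sum_{i=1}^{n}x_i=\prod_{j=1}^{m}y_j$.

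The key structural input is the existence of a multiplicatively idempotent ultrafilter $\mathcal{V}=\mathcal{V}\cdot\mathcal{V}$ lying in the smallest two-sided ideal $K(\bN,+)$ of the \emph{additive} semigroup — such ultrafilters exist because $K(\bN,+)$ is a two-sided ideal, hence a subsemigroup closed under $\cdot$ as well (more precisely, one uses that $\overline{K(\bN,+)}$ is a compact right-topological semigroup under $\cdot$ and applies Ellis's theorem to get a $\cdot$-idempotent there; Hindman's original argument uses essentially this). First I would fix such a $\mathcal{V}$. Then, for a set $A\in\mathcal{V}$: multiplicative idempotency gives, via the Galvin–Glazer/Hindman finite-sums machinery transported to the multiplicative setting, an infinite sequence whose finite \emph{products} all lie in $A$; in particular $A$ contains $\prod_{j=1}^{m}y_j$ for suitable distinct $y_j$ once we know $A$ is also an IP$^*$-type set on the additive side. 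Because $\mathcal{V}\in K(\bN,+)$, every $A\in\mathcal{V}$ is additively "central" (or at least piecewise syndetic), so $A$ contains solutions of the partition-regular linear equation $\sum_{i=1}^{n}x_i=t$ with the common value $t$ also forced into $A$ — this is where Rado's theorem (Theorem \ref{Rado}), applied to $x_1+\dots+x_n-t$ which has the coefficient subset $\{$the $-1\}\cup$ nothing… more carefully, one notes $\sum x_i - \sum x_i=0$ style subsystems are handled by centrality and the Central Sets Theorem. The two halves are then glued: pick the product $t=\prod y_j\in A$ first, then find $x_1,\dots,x_n\in A$ distinct with $\sum x_i=t$, using that $A$ is additively central and $t$ can be taken large enough to accommodate distinctness (this is where $n+m\geq 3$ enters — it guarantees enough free variables to force injectivity).

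The main obstacle, and the step I expect to be most delicate, is the interaction of the two semigroup structures: one must choose the single ultrafilter $\mathcal{V}$ so that its members are simultaneously rich enough multiplicatively (to contain a full IP-set of products) and additively (to be central, so that the Central Sets Theorem yields the distinct $x_i$ summing to the prescribed product value). Ensuring this is exactly the content of locating a $\cdot$-idempotent inside $K(\bN,+)$ — a nontrivial fact since $+$ and $\cdot$ do not distribute at the level of $\bN$. Handling \emph{injectivity} rather than mere partition regularity is the secondary technical point: one must run the Central Sets Theorem argument with distinct-parameter bookkeeping, arranging the $y_j$'s to be distinct from each other and the $x_i$'s to be distinct by choosing the sequence of products sufficiently sparse before extracting the linear solution. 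The paper promises a cleaner nonstandard route; the above is the "classical translation" I would attempt first, and I anticipate the nonstandard version replaces the idempotent-in-$K(\bN,+)$ construction with a direct choice of a single nonstandard element $\alpha$ whose additive and multiplicative "pieces" can be inspected coordinatewise, sidestepping the distributivity failure.
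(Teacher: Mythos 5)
Your proposal correctly identifies the overall framework that the paper (and Hindman's original argument) uses: find a single ultrafilter that is simultaneously a multiplicative idempotent and rich enough additively that each of its members contains solutions of the linear equation $\sum_{i=1}^{n}x_i=y$. Your candidate (a $\odot$-idempotent in $\overline{K(\bN,+)}$) would indeed have both properties; the paper instead gets its idempotent from Ellis's theorem applied to the closed bilateral ideal of $\iota_R$-ultrafilters for the linear polynomial $R=\sum_i x_i-y$, which is the same idea with less machinery.

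The genuine gap is in your gluing step. You propose to ``pick the product $t=\prod_j y_j\in A$ first, then find $x_1,\dots,x_n\in A$ distinct with $\sum_i x_i=t$, using that $A$ is additively central.'' Centrality (or the Central Sets Theorem) only guarantees that $A$ contains \emph{some} solution $(x_1,\dots,x_n,t)$ of the homogeneous linear equation; it does not let you prescribe the value $t$ in advance, and for a fixed $t$ the equation $\sum_i x_i=t$ is an inhomogeneous equation with no partition-regularity guarantee whatsoever ($A$ may simply miss every representation of that particular $t$). The missing idea is the rescaling trick that exploits homogeneity of the linear equation: first extract a partial product $P=y_1\cdots y_{m-1}$ of elements of $A$ with $P^{-1}A$ still in the ultrafilter (this is what multiplicative idempotency buys), then solve $\sum_i z_i=y_m$ with $z_1,\dots,z_n,y_m\in A\cap P^{-1}A$, and finally set $x_i=Pz_i\in A$, so that $\sum_i x_i=P\,y_m=\prod_j y_j$. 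The paper's nonstandard proof is exactly this computation written with generators: it takes $\alpha_1,\dots,\alpha_n,\beta\in G_{\U}$ with $\sum_i\alpha_i=\beta$, multiplies the $\alpha_i$ by a product $\eta$ of iterated-star copies of $\beta$ (idempotency keeps $\alpha_i\cdot\eta$ in $G_{\U}$), and factors $\eta$ out of the identity $\sum_i\alpha_i\eta-\eta\beta=\eta(\sum_i\alpha_i-\beta)=0$. A smaller point: the role of $n+m\geq 3$ is not about ``taking $t$ large enough''; it only excludes the degenerate equation $x=y$, and injectivity comes from the Hindman--Leader result for linear equations together with the injectivity of the generators used in the construction, not from a sparseness argument.
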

Theorem \ref{ConsHind} is a consequence of a far more general result that has been proved in \cite{rif13}.\\
The two main results in our paper are generalizations of Theorem \ref{ConsHind}.\\
In Theorem \ref{lev} we prove that, if $P(x_{1},...,x_{n})=\sum\limits_{i=1}^{n} a_{i}x_{i}$ is a linear injectively partition regular polynomial, $y_{1},...,y_{m}$ are not variables of $P(x_{1},...,x_{n})$, and $F_{1},...,F_{n}$ are subsets of $\{1,...,m\}$, the polynomial
\begin{equation*} R(x_{1},...,x_{n},y_{1},...,y_{m})=\sum_{i=1}^{n} a_{i}(x_{i}\cdot\prod_{j\in F_{i}}y_{j}) \end{equation*}
(having posed $\prod\limits_{j\in F_{i}}y_{j}=1$ if $F_{i}=\emptyset$) is injectively partition regular. E.g., as a consequence of Theorem \ref{lev} we have that the polynomial
\begin{equation*} P(x_{1},x_{2},x_{3},x_{4},y_{1},y_{2},y_{3})= 2x_{1}+3x_{2}y_{1}y_{2}-5x_{3}y_{1}+x_{4}y_{2}y_{3} \end{equation*}
is injectively partition regular. The particularity of polynomials considered in Theorem \ref{lev} is that the degree of each of their variables is one. In Theorem \ref{NLP} we prove that, by slightly modifying the hypothesis of Theorem \ref{lev}, we can ensure the partition regularity for many polynomials having variables with degree greater than one: e.g., as a consequence of Theorem \ref{NLP} we get that the polynomial
 \begin{equation*} P(x,y,z,t_{1},t_{2},t_{3},t_{4},t_{5},t_{6})= t_{1}t_{2}x^{2}+t_{3}t_{4}y^{2}-t_{5}t_{6}z^{2} \end{equation*}
is injectively partition regular.\\
The technique we use to prove our main results is based on an approach to combinatorics by means of nonstandard analysis (see \cite{rif84}, \cite{Tesi}): the idea behind this approach is that, as it is well-known, problems related to partition regularity can be reformulated in terms of ultrafilters. Following an approach that has something in common with the one used by Christian W. Puritz in his articles \cite{rif15}, \cite{rif16}, the one used by Joram Hirschfeld in \cite{rif14} and the one used by Greg Cherlin and Joram Hirschfeld in \cite{rif99}, it can be shown that some properties of ultrafilters can be translated and studied in terms of sets of hyperintegers. This can be obtained by associating, in particular hyperextensions $^{*}\N$ of $\N$, to every ultrafilter $\U$ its monad $\mu(\U)$:
\begin{center} $\mu(\U)=\{\alpha\in$$^{*}\N\mid \alpha\in$$^{*}A$ for every $A\in\U\}$, \end{center}
and then proving that some of the properties of $\U$ can be deduced by properties of $\mu(\U)$ (see \cite{Tesi}, Chapter 2). In particular, we prove that a polynomial $P(x_{1},...,x_{n})$ is injectively partition regular if and only if there is an ultrafilter $\U$, and mutually distinct elements $\alpha_{1},...,\alpha_{n}$ in the monad of $\U$, such that $P(\alpha_{1},...,\alpha_{n})=0$.\\
We will only recall the basic results regarding this nonstandard technique, since it has already been introduced in \cite{rif84} and \cite{Tesi} .\\
The paper is organized as follows: the first part, consisting of section 2, contains an introduction that covers all the needed nonstandard results. In the second part, that consists of sections 3 and 4, we apply the nonstandard technique to prove that there are many nonlinear injectively partition regular polynomials.\\
Finally, in the conclusions, we pose two questions that we think to be quite interesting and challenging.
\section{Basic Results and Definitions}
\subsection{Notions about Polynomials}
In this work, by "polynomial" we mean any $P(x_{1},...,x_{n})\in\Z[\mathbf{X}]$, where $\mathbf{X}$ is a countable set of variables, $\wp_{fin}(\mathbf{X})$ is the set of finite subsets of $\mathbf{X}$ and
\begin{center} $\Z[\mathbf{X}]=\bigcup\limits_{Y\in\wp_{fin}(\mathbf{X})}\Z[Y]$. \end{center}
Given a variable $x\in \mathbf{X}$ and a polynomial $P(x_{1},...,x_{n})$, we denote by $\mathbf{d_{P}(x)}$ the degree of $x$ in $P(x_{1},...,x_{n})$.\\

{\bfseries Convention:} When we write $P(x_{1},...,x_{n})$ we mean that $x_{1},...,x_{n}$ are all and only the variables of $P(x_{1},...,x_{n})$: for every variable $x\in\mathbf{X}$, $d_{P}(x)\geq 1$ if and only if $x\in\{x_{1},...,x_{n}\}$. The only exception is when we have a polynomial $P(x_{1},...,x_{n})$ and we consider one of its monomial: in this case, for the sake of simplicity, we write the monomial as $M(x_{1},...,x_{n})$ even if some of the variables $x_{1},...,x_{n}$ may not divide $M(x_{1},...,x_{n})$.\\

Given the polynomial $P(x_{1},...,x_{n})$, we call {\bfseries set of variables} of $P(x_{1},...,x_{n})$ the set $V(P)=\{x_{1},...,x_{n}\}$, and we call {\bfseries partial degree} of $P(x_{1},...,x_{n})$ the maximum degree of its variables.\\
We recall that a polynomial is linear if all its monomials have degree equal to one and that it is homogeneous if all its monomials have the same degree. Among the nonlinear polynomials, an important class for our purposes is the following:
\begin{defn} A polynomial $P(x_{1},...,x_{n})$ is {\bfseries linear in each variable} $($from now on abbreviated as l.e.v.$)$ if its partial degree is equal to one. \end{defn}
Rado's Theorem \ref{Rado} leads to introduce the following definition:
\begin{defn} A polynomial 
\begin{center} $P(x_{1},...,x_{n})= \sum\limits_{i=1}^{k} a_{i}M_{i}(x_{1},...,x_{n})$, \end{center}
where $M_{1}(x_{1},...,x_{n}),...,M_{k}(x_{1},...,x_{n})$ are its distinct monic monomials, satisfies {\bfseries Rado's Condition} if there is a nonempty subset $J\subseteq \{1,...,k\}$ such that $\sum\limits_{j\in J} a_{j}=0$. \end{defn}
We observe that Rado's Theorem talks about polynomials with constant term equal to zero. In fact Rado, in \cite{rif17}, proved that, when the constant term is not zero, the problem of the partition regularity of $P(x_{1},...,x_{n})$ becomes, in some sense, trivial:
\begin{thm}[Rado] Suppose that 
\begin{center} $P(x_{1},...,x_{n})=(\sum\limits_{i=1}^{n} a_{i}x_{i})+c$ \end{center}
is a polynomial with non-zero constant term $c$. Then $P(x_{1},...,x_{n})$ is partition regular on $\N$ if and only if either 
\begin{enumerate}
	\item there exists a natural number $k$ such that $P(k,k,...,k)=0$;
	\item there exists an integer $z$ such that $P(z,z,...,z)=0$ and there is a nonempty subset $J$ of $\{1,...,n\}$ such that $\sum\limits_{j\in J}a_{j}=0$.
\end{enumerate}
\end{thm}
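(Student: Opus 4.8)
The plan is to prove the two implications separately, using two elementary ingredients: colorings of $\N$ by residue classes (for necessity) and translation–dilation arguments combined with Rado's Theorem \ref{Rado} (for sufficiency). Throughout I would write $s=\sum_{i=1}^{n}a_{i}$, so that $P(k,\dots,k)=sk+c$ for every $k$, and recall that by our convention every $a_{i}$ is nonzero, so Theorem \ref{Rado} applies to $\sum_{i}a_{i}x_{i}$.

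For the ``if'' direction: if (1) holds, say $P(k,\dots,k)=0$ with $k\in\N$, then for every finite coloring the constant tuple $(k,\dots,k)$ is a monochromatic solution. So assume (2). Since $c\neq0$ we have $s\neq0$, and $P(z,\dots,z)=0$ forces $z=-c/s$; if $z>0$ then (1) already holds, so we may assume $z=-d$ with $d\in\N$, i.e.\ $c=sd$. Fix a finite coloring $\chi$ of $\N$. The homogeneous polynomial $\sum_{i=1}^{n}a_{i}u_{i}$ has nonzero coefficients and satisfies Rado's Condition via the given set $J$, so by Theorem \ref{Rado} it is partition regular; applying this to the coloring $n\mapsto\chi\bigl((d+1)n-d\bigr)$ (which is well defined into $\N$ since $(d+1)n-d\geq1$) produces $v_{1},\dots,v_{n}\in\N$ in a single color class with $\sum_{i}a_{i}v_{i}=0$. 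Then $x_{i}:=(d+1)v_{i}-d\in\N$ are $\chi$-monochromatic, and $\sum_{i}a_{i}x_{i}=(d+1)\sum_{i}a_{i}v_{i}-ds=-ds=-c$, i.e.\ $P(x_{1},\dots,x_{n})=0$.

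For the ``only if'' direction, suppose $P$ is partition regular. First I would pin down the arithmetic of the coefficients: in a coloring of $\N$ by residues modulo $m$, a monochromatic solution has all $x_{i}$ in one class $\rho$, forcing $-c\equiv s\rho\pmod m$. Taking $m=|c|+1$ makes this impossible when $s=0$, so $s\neq0$; taking $m=|s|$ (when $|s|\geq2$, the case $|s|=1$ being trivial) then forces $s\mid c$. Hence $z:=-c/s\in\Z$ and $P(z,\dots,z)=0$. If $z>0$ we are in case (1). Otherwise $z\neq0$ (as $c\neq0$), so $z=-d$ with $d\in\N$; to reach case (2) it remains to exhibit the set $J$, and for this it suffices to show the homogeneous part $\sum_{i}a_{i}x_{i}$ is partition regular and invoke the nontrivial direction of Theorem \ref{Rado}. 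This is just the reverse translation: given a finite coloring $\chi$ of $\N$, partition regularity of $P$ applied to $n\mapsto\chi(n+d)$ yields $y_{1},\dots,y_{n}$ of one color under that coloring with $\sum_{i}a_{i}y_{i}=-ds$, whence $x_{i}:=y_{i}+d\in\N$ are $\chi$-monochromatic with $\sum_{i}a_{i}x_{i}=0$.

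I expect the crux to be the divisibility claim $s\mid c$ in the necessity argument: one has to pick the residue-class modulus correctly and handle the degenerate values $s\in\{0,\pm1\}$ by hand. The two translation arguments are routine once one notices that the only structural fact about $P$ ever used is Rado's Condition on its coefficients, which Theorem \ref{Rado} lets us pass freely between ``$P$ is partition regular'' and ``the homogeneous part of $P$ is partition regular''; the one place needing a little care is the dilation by $d+1$ in the ``if'' direction, inserted precisely to keep the constructed tuple inside $\N=\{1,2,\dots\}$.
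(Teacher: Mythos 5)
The paper does not actually prove this statement: it is quoted as a known result of Rado and attributed to \cite{rif17}, so there is no internal proof to compare against. Your argument is, as far as I can check, a correct and self-contained proof. Both directions work: the dilation $n\mapsto(d+1)n-d$ keeps the constructed solution in $\N$ and converts a monochromatic zero of the homogeneous part into a monochromatic root of $P$, and conversely the congruence obstruction with $m=|c|+1$ and $m=|s|$ correctly forces $s\neq0$ and $s\mid c$, after which the translation $n\mapsto\chi(n+d)$ reduces to the homogeneous case and the ``only if'' half of Theorem \ref{Rado} supplies the set $J$. Two spots deserve one more line each in a final write-up: in the sufficiency argument the inference ``$c\neq0$ hence $s\neq0$'' should be phrased as following from $sz=-c\neq0$ (not from $c\neq0$ alone), and the exclusion of $z=0$ (because $P(0,\dots,0)=c\neq0$) should be stated explicitly before writing $z=-d$ with $d\in\N$. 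Also note that for $n=1$ the ``homogeneous part is partition regular'' conclusion in the necessity argument is vacuous (the hypothesis cannot occur when $z<0$), which is harmless but worth a remark. Your approach — passing back and forth between $P$ and its homogeneous part by affine changes of variable — is essentially Rado's original one as presented in \cite{rif10}, and is entirely appropriate here.
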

In order to avoid similar problems, we make the following decision: all the polynomials that we consider in this paper have constant term equal to zero.\\
The last fact that we will often use regards the injective partition regularity of linear polynomials. In \cite{rif11} the authors proved, as a particular consequence of their Theorem 3.1, that a linear partition regular polynomial is injectively partition regular if it has at least one injective solution. Since this last condition is true for every such polynomial (except the polynomial $P(x,y)=x-y$, of course), they concluded that every linear partition regular polynomial on $\N$ is also injectively partition regular. We will often use this fact when studying the injective partition regularity of nonlinear polynomials.
\subsection{The Nonstandard Point of View}
In this section we recall the results that allow us to study the problem of partition regularity of polynomials by mean of nonstandard techniques applied to ultrafilters (see also \cite{rif84} and \cite{Tesi}). We suggest \cite{rif12} as a general reference about ultrafilters, \cite{rif1}, \cite{rif3} or \cite{rif19} as introductions to nonstandard methods and \cite{rif5} as a reference for the model theoretic notions that we use.\\
We assume the knowledge of the nonstandard notions and tools that we use, in particular the knowledge of superstructures, star map and enlarging properties (see, e.g., \cite{rif5}). We just recall the definition of superstructure model of nonstandard methods, since these are the models that we use:
\begin{defn} A {\bfseries superstructure model of nonstandard methods} is a triple $\langle \mathbb{V}(X), \mathbb{V}(Y), *\rangle$ where 
\begin{enumerate}
	\item a copy of $\N$ is included in $X$ and in $Y$;
	\item $\mathbb{V}(X)$ and $\mathbb{V}(Y)$ are superstructures on the infinite sets $X$, $Y$ respectively;
	\item $*$ is a proper star map from $\mathbb{V}(X)$ to $\mathbb{V}(Y)$ that satisfies the transfer property.
\end{enumerate}
\end{defn}
In particular, we use single superstructure models of nonstandard methods, i.e. models where $\mathbb{V}(X)=\mathbb{V}(Y)$, which existence is proved in \cite{rif2}, \cite{rif50} and \cite{rif7}. These models have been chosen because they allow to iterate the star map and this, in our nonstandard technique, is needed to translate the operations between ultrafilters in a nonstandard setting.\\
The study of partition regular polynomials can be seen as a particular case of a more general problem:
\begin{defn}[] Let $\mathcal{F}$ be a family, closed under superset, of nonempty subsets of a set $S$. $\mathcal{F}$ is {\bfseries partition regular} if, whenever $S=A_{1}\cup...\cup A_{n}$, there exists an index $i\leq n$ such that $A_{i}\in \mathcal{F}$.
\end{defn}
Given a polynomial $P(x_{1},...,x_{n})$, we have that $P(x_{1},...,x_{n})$ is (injectively) partition regular if and only if the family of subsets of $\N$ that contain a(n injective) solution to $P(x_{1},...,x_{n})$ is partition regular. We recall that partition regular families of subsets of a set $S$ are related to ultrafilters on $S$:
\begin{thm}\label{ultra} Let $S$ be a set, and $\mathcal{F}$ a family, closed under supersets, of nonempty subsets of $S$. Then $\mathcal{F}$ is partition regular if and only if there exists an ultrafilter $\U$ on $S$ such that $\U\subseteq\mathcal{F}$.
\end{thm}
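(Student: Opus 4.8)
The plan is to prove the two implications separately; the statement is a folklore fact about ultrafilters, and both directions are short once one isolates the right elementary observations.

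For the implication ``$\U\subseteq\mathcal{F}$ for some ultrafilter $\U$ $\Rightarrow$ $\mathcal{F}$ partition regular'', I would first record the primality of ultrafilters under finite unions: if $B_{1}\cup\dots\cup B_{n}\in\U$ then $B_{i}\in\U$ for some $i$. This follows by induction on $n$, the case $n=2$ being immediate: if $B_{1}\notin\U$ then $S\setminus B_{1}\in\U$, so $B_{2}\supseteq(B_{1}\cup B_{2})\cap(S\setminus B_{1})\in\U$, and filters are closed under supersets. Given any finite partition $S=A_{1}\cup\dots\cup A_{n}$, since $S\in\U$ we get some $A_{i}\in\U\subseteq\mathcal{F}$, which is exactly the conclusion required.

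For the converse, assume $\mathcal{F}$ is partition regular. Note $S\in\mathcal{F}$ (take the one-block partition) and $\emptyset\notin\mathcal{F}$ (members of $\mathcal{F}$ are nonempty), and set $\mathcal{D}=\{S\setminus B : B\subseteq S,\ B\notin\mathcal{F}\}$. The crucial claim is that $\mathcal{D}$ has the finite intersection property. Suppose not: there are $B_{1},\dots,B_{n}\notin\mathcal{F}$ with $\bigcap_{i\le n}(S\setminus B_{i})=\emptyset$, i.e.\ $B_{1}\cup\dots\cup B_{n}=S$. Turning this cover into a partition by replacing $B_{k}$ with $B_{k}\setminus\bigcup_{j<k}B_{j}$ and applying partition regularity, some block lies in $\mathcal{F}$, hence (by upward closure of $\mathcal{F}$) some $B_{i}\in\mathcal{F}$, a contradiction. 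Thus $\mathcal{D}$ has the finite intersection property, so by the usual Zorn's-lemma extension argument it is contained in an ultrafilter $\U$ on $S$.

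It remains to see $\U\subseteq\mathcal{F}$. If some $C\in\U$ were not in $\mathcal{F}$, then $S\setminus C\in\mathcal{D}\subseteq\U$, so $\emptyset=C\cap(S\setminus C)\in\U$, contradicting that $\U$ is a proper filter; hence every member of $\U$ lies in $\mathcal{F}$. I do not anticipate a real obstacle in this argument: the only step demanding care is checking the finite intersection property of $\mathcal{D}$, and that is precisely the point at which partition regularity of $\mathcal{F}$ --- together with the harmless refinement of a finite cover into a finite partition, valid because $\mathcal{F}$ is closed under supersets --- is invoked.
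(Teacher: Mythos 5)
Your proof is correct. Note that the paper does not actually prove this statement: it simply defers to Theorem 3.11 of Hindman and Strauss's \emph{Algebra in the Stone-\v{C}ech Compactification}, so your argument is a self-contained reconstruction of that standard fact rather than a variant of anything in the paper. Both directions are sound: the forward implication is the usual primality of ultrafilters under finite unions, and the converse correctly isolates the key point, namely that $\mathcal{D}=\{S\setminus B : B\notin\mathcal{F}\}$ has the finite intersection property precisely because $\mathcal{F}$ is partition regular and upward closed, after which Zorn's lemma produces the desired ultrafilter $\U\supseteq\mathcal{D}$ with $\U\subseteq\mathcal{F}$. One small simplification: the paper's definition of a partition regular family quantifies over arbitrary finite covers $S=A_{1}\cup\dots\cup A_{n}$, not genuine partitions, so your step of disjointifying the cover $B_{1}\cup\dots\cup B_{n}=S$ is unnecessary (though harmless --- and it is the right move under the stricter reading of ``partition,'' where one should also note that empty blocks cannot lie in $\mathcal{F}$ since its members are nonempty). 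The only other pedantic point is that the finite intersection property argument implicitly uses $S\neq\emptyset$, which indeed follows from partition regularity of $\mathcal{F}$.
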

\begin{proof} This is just a slightly changed formulation of Theorem 3.11 in \cite{rif12}.\end{proof}
Theorem \ref{ultra} leads to introduce two special classes of ultrafilters:
\begin{defn} Let $P(x_{1},...,x_{n})$ be a polynomial, and $\U$ an ultrafilter on $\N$. Then: 
\begin{enumerate}
	\item $\U$ is a $\mathbf{\sigma_{P}}${\bfseries-ultrafilter} if and only if for every set $A\in\U$ there are $a_{1},...,a_{n}\in A$ such that $P(a_{1},..,a_{n})=0$;
	\item $\U$ is a $\mathbf{\iota_{P}}${\bfseries-ultrafilter} if and only if for every set $A\in\U$ there are mutually distinct elements $a_{1},...,a_{n}\in A$ such that $P(a_{1},..,a_{n})=0$.
\end{enumerate}
\end{defn}
As a consequence of Theorem \ref{ultra}, it follows that a polynomial $P(x_{1},...,x_{n})$ is partition regular on $\N$ if and only if there is a $\sigma_{P}$-ultrafilter $\U$ on $\N$, and it is injectively partition regular if and only if there is a $\iota_{P}$-ultrafilter on $\N$.\\
The idea behind the research presented in this paper is that such ultrafilters can be studied, with some important advantages, from the point of view of Nonstandard Analysis. The models of nonstandard analysis that we use are the single superstructure models satisfying the $\mathfrak{c}^{+}$-enlarging property. These models allow to associate hypernatural numbers to ultrafilters on $\N$:
\begin{prop} $(1)$ Let $^{*}\N$ be a hyperextension of $\N$. For every hypernatural number $\alpha$ in $^{*}\N$, the set 
\begin{center} $\mathfrak{U}_{\alpha}=\{A\in\N\mid \alpha\in$$^{*}A\}$ \end{center} is an ultrafilter on $\N$.\\
$(2)$ Let $^{*}\N$ be a hyperextension of $\N$ with the $\mathfrak{c}^{+}$-enlarging property. For every ultrafilter $\U$ on $\N$ there exists an element $\alpha$ in $^{*}\N$ such that $\U=\mathfrak{U}_{\alpha}$.\end{prop}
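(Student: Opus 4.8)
The plan is to treat the two parts separately: part $(1)$ is a routine application of the transfer property, while part $(2)$ rests on the $\mathfrak{c}^{+}$-enlarging property.

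For part $(1)$, I would verify the ultrafilter axioms directly. That $\N\in\mathfrak{U}_{\alpha}$ is immediate from $\alpha\in{}^{*}\N$, and $\emptyset\notin\mathfrak{U}_{\alpha}$ follows from ${}^{*}\emptyset=\emptyset$. Closure under supersets follows because $A\subseteq B$ implies ${}^{*}A\subseteq{}^{*}B$ by transfer; closure under finite intersections follows from ${}^{*}(A\cap B)={}^{*}A\cap{}^{*}B$; and maximality, i.e. that for every $A\subseteq\N$ either $A\in\mathfrak{U}_{\alpha}$ or $\N\setminus A\in\mathfrak{U}_{\alpha}$, follows from ${}^{*}(\N\setminus A)={}^{*}\N\setminus{}^{*}A$, again by transfer. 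Each of these is a one-line argument.

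For part $(2)$, fix an ultrafilter $\U$ on $\N$ and consider the family $\mathcal{G}=\{{}^{*}A\mid A\in\U\}$. First I would observe that $\mathcal{G}$ has the finite intersection property: given $A_{1},\dots,A_{k}\in\U$, the set $A_{1}\cap\dots\cap A_{k}$ lies in $\U$, hence is nonempty, and since the star map commutes with finite intersections and sends nonempty sets to nonempty sets, ${}^{*}A_{1}\cap\dots\cap{}^{*}A_{k}={}^{*}(A_{1}\cap\dots\cap A_{k})\neq\emptyset$. Next, since $\U$ is a family of subsets of $\N$, we have $|\U|\leq 2^{\aleph_{0}}=\mathfrak{c}$; the $\mathfrak{c}^{+}$-enlarging property then guarantees that $\bigcap_{A\in\U}{}^{*}A\neq\emptyset$. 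Choose any $\alpha$ in this intersection. Since $\N\in\U$ we get $\alpha\in{}^{*}\N$, so $\alpha$ is a hypernatural number, and by construction $A\in\mathfrak{U}_{\alpha}$ for every $A\in\U$, i.e. $\U\subseteq\mathfrak{U}_{\alpha}$. Finally, since $\U$ is an ultrafilter and $\mathfrak{U}_{\alpha}$ is a proper filter by part $(1)$, the inclusion $\U\subseteq\mathfrak{U}_{\alpha}$ forces $\U=\mathfrak{U}_{\alpha}$ by maximality of $\U$.

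The only genuinely nontrivial step is the appeal to the $\mathfrak{c}^{+}$-enlarging property, and what makes it applicable is precisely the cardinality bound $|\U|\leq\mathfrak{c}$ together with the finite intersection property of $\mathcal{G}$; everything else is bookkeeping with transfer. I expect no real obstacle beyond recording the enlarging property in the form ``every family of fewer than $\mathfrak{c}^{+}$ sets with the finite intersection property has star-extensions with nonempty intersection,'' which is its standard formulation.
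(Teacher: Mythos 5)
Your proof is correct and is the standard argument; note that the paper itself does not prove this proposition but simply cites the literature (Hirschfeld--Machover and Luxemburg), so your write-up supplies exactly the routine verification being delegated there. Both halves check out: the transfer-based verification of the ultrafilter axioms in part $(1)$, and in part $(2)$ the application of the $\mathfrak{c}^{+}$-enlarging property to the family $\{{}^{*}A\mid A\in\U\}$, which is legitimate precisely because $|\U|\leq\mathfrak{c}$ and the family has the finite intersection property, followed by the maximality argument upgrading $\U\subseteq\mathfrak{U}_{\alpha}$ to equality.
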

\begin{proof} These facts are proved, e.g., in \cite{rif97} and in \cite{rif98}.\end{proof}
\begin{defn} Given an ultrafilter $\U$ on $\N$, its {\bfseries set of generators} is
\begin{center} $G_{\U}=\{\alpha\in$$^{*}\N\mid \U=\mathfrak{U}_{\alpha}\}$. \end{center}
\end{defn}
E.g., if $\U=\mathfrak{U}_{n}$ is the principal ultrafilter on $n$, then $G_{\U}=\{n\}$.\\
Here a disclaimer is in order: usually, the set $G_{\U}$ is called "monad of $\U$"; in this paper, from this moment on, the monad on $\U$ will be called "set of generators of $\U$" because, as we will show in Theorem \ref{PBT}, many combinatorial properties of $\U$ can be seen as actually "generated" by properties of the elements in $G_{\U}$.\\
The following is the result that motivates our nonstandard point of view:
\begin{thm}[Polynomial Bridge Theorem]\label{PBT} Let $P(x_{1},...,x_{n})$ be a polynomial, and $\U$ an ultrafilter on $\bN$. The following two conditions are equivalent:
\begin{enumerate}
	\item $\U$ is a $\iota_{P}$-ultrafilter;
	\item there are mutually distinct elements $\alpha_{1},...,\alpha_{n}$ in $G_{\U}$ such that $P(\alpha_{1},...,\alpha_{n})=0$.
\end{enumerate}
\end{thm}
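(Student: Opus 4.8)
The plan is to prove the two implications separately: $2 \Rightarrow 1$ is a downward transfer argument, while $1 \Rightarrow 2$ is where the $\mathfrak{c}^{+}$-enlarging property does the work.

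For $2 \Rightarrow 1$ I would argue as follows. Suppose $\alpha_{1},\dots,\alpha_{n}\in G_{\U}$ are mutually distinct with $P(\alpha_{1},\dots,\alpha_{n})=0$, and fix an arbitrary $A\in\U$. Since $\U=\mathfrak{U}_{\alpha_{i}}$ for each $i$, we get $\alpha_{i}\in{}^{*}A$; combined with $\alpha_{i}\neq\alpha_{j}$ for $i\neq j$ and $P(\alpha_{1},\dots,\alpha_{n})=0$, this witnesses the bounded first-order statement ``there exist mutually distinct $y_{1},\dots,y_{n}\in{}^{*}A$ with $P(y_{1},\dots,y_{n})=0$'' (here $P$ may be read as its own nonstandard extension, since it has standard integer coefficients). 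Transfer then yields mutually distinct $a_{1},\dots,a_{n}\in A$ with $P(a_{1},\dots,a_{n})=0$. As $A\in\U$ was arbitrary, $\U$ is a $\iota_{P}$-ultrafilter.

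For $1 \Rightarrow 2$, assume $\U$ is a $\iota_{P}$-ultrafilter and, for each $A\in\U$, set
\begin{equation*}
S_{A}=\{(a_{1},\dots,a_{n})\in A^{n}: a_{i}\neq a_{j}\text{ whenever }i\neq j,\ P(a_{1},\dots,a_{n})=0\}.
\end{equation*}
By the $\iota_{P}$ hypothesis every $S_{A}$ is nonempty, and $\{S_{A}: A\in\U\}$ has the finite intersection property since $S_{A_{1}}\cap\cdots\cap S_{A_{k}}\supseteq S_{A_{1}\cap\cdots\cap A_{k}}\neq\emptyset$. Because $|\U|\leq\mathfrak{c}$, the $\mathfrak{c}^{+}$-enlarging property furnishes a tuple $(\alpha_{1},\dots,\alpha_{n})\in\bigcap_{A\in\U}{}^{*}S_{A}$. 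By transfer, belonging to every ${}^{*}S_{A}$ forces the $\alpha_{i}$ to be pairwise distinct, $P(\alpha_{1},\dots,\alpha_{n})=0$, and $\alpha_{i}\in{}^{*}A$ for every $A\in\U$ and every $i$; the last clause says $\U\subseteq\mathfrak{U}_{\alpha_{i}}$, hence $\U=\mathfrak{U}_{\alpha_{i}}$ (both being ultrafilters), i.e.\ $\alpha_{i}\in G_{\U}$. This is precisely condition $2$.

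The only genuinely delicate point is in $1\Rightarrow 2$: I need one and the same tuple to solve $P=0$, have pairwise distinct coordinates, and generate $\U$ in every coordinate simultaneously, and it is exactly this simultaneity that the enlarging property extracts from the FIP family $\{S_{A}\}_{A\in\U}$. The two things to watch are that the ``mutually distinct'' requirement is built into the sets $S_{A}$ (so that transfer preserves it) and that the index family has size at most $\mathfrak{c}$ (so that the $\mathfrak{c}^{+}$-enlarging property applies); apart from that the proof is routine transfer bookkeeping.
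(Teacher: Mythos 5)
Your proof is correct and follows essentially the same route as the paper: the forward direction uses the same family $S_{A}$ of distinct solutions inside $A$, the finite intersection property, and the $\mathfrak{c}^{+}$-enlarging property, while the converse is the same transfer argument (you phrase it directly where the paper argues by contradiction, which is an immaterial difference). No gaps.
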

\begin{proof} $(1)\Rightarrow (2)$: Given a set $A$ in $\U$, we consider
\begin{center} $S_{A}=\{(a_{1},...,a_{n})\in A^{n}\mid a_{1},...,a_{n}$ are mutually distinct and $P(a_{1},...,a_{n})=0\}$. \end{center}
We observe that, by hypothesis, $S_{A}$ is nonempty for every set $A$ in $\U$, and that the family \{$S_{A}\}_{A\in \U}$ has the finite intersection property. In fact, if $A_{1},...,A_{m}\in\U$, then
\begin{center} $S_{A_{1}}\cap...\cap S_{A_{m}}=S_{A_{1}\cap...\cap A_{m}}\neq\emptyset$.\end{center}
By $\mathfrak{c}^{+}$-enlarging property, the intersection
\begin{center} $S=\bigcap\limits_{A\in\U}$$^{*}S_{A}$\end{center}
is nonempty. Since, by construction, 
\begin{center} "for every $(a_{1},....,a_{n})\in S_{A}$ $a_{1},...,a_{n}$ are mutually distinct and $P(a_{1},...,a_{n})=0$",\end{center}
by transfer it follows 
\begin{center} "for every $(\alpha_{1},...,\alpha_{n})\in$$^{*}S_{A}$ $\alpha_{1},...,\alpha_{n}$ are mutually distinct and $P(\alpha_{1},...,\alpha_{n})=0$".\end{center}
Let $(\alpha_{1},...,\alpha_{n})$ be an element of $S$. As we observed, $P(\alpha_{1},...,\alpha_{n})=0$, $\alpha_{1},...,\alpha_{n}$ are mutually distinct and, by construction, $\alpha_{1},...,\alpha_{n}\in G_{\U}$ since, for every index $i\leq n$, for every set $A$ in $\U$, $\alpha_{i}\in$$^{*}A$.\\
$(2)\Rightarrow (1)$: Let $\alpha_{1},...,\alpha_{n}$ be mutually distinct elements in $G_{\U}$ such that \\
$P(\alpha_{1},...,\alpha_{n})=0$, and let us suppose that $\U$ is not a $\iota_{P}$-ultrafilter. Let $A$ be an element of $\U$ such that, for every mutually distinct $a_{1},...,a_{n}$ in $A\setminus\{0\}$, $P(a_{1},....,a_{n})\neq 0$.\\
Then by transfer it follows that, for every mutually distinct $\xi_{1},...,\xi_{n}$ in $^{*}A$, 
\begin{center} $P(\xi_{1},...,\xi_{n})\neq 0$;\end{center}
in particular, as $G_{\U}\subseteq$$^{*}A$, for every mutually distinct $\xi_{1},...,\xi_{n}$ in $G_{\U}$ we have $P(\xi_{1},...,\xi_{n})\neq 0$, and this is absurd. Hence $\U$ is a $\iota_{P}$-ultrafilter. \end{proof}
{\bfseries Remark 1:} We obtain similar results if we require that only some of the variables take distinct values: e.g., if we ask for solutions where $x_{1}\neq x_{2}$, we have that for every set $A$ in $\U$ there are $a_{1},...,a_{n}$ with $a_{1}\neq a_{2}$ and $P(a_{1},...,a_{n})=0$ if and only if in $G_{\U}$ there are $\alpha_{1},...,\alpha_{n}$ with $\alpha_{1}\neq\alpha_{2}$ and $P(\alpha_{1},...,\alpha_{n})=0$.\\

{\bfseries Remark 2:} The Polynomial Bridge Theorem is a particular case of a far more general result, that we proved in \cite{Tesi} (Theorem 2.2.9) and we called Bridge Theorem. Roughly speaking, the Bridge Theorem states that, given an ultrafilter $\U$ and a first order open formula $\varphi(x_{1},...,x_{n})$, for every set $A\in\U$ there are elements $a_{1},...,a_{n}\in A$ such that $\varphi(a_{1},...,a_{n})$ holds if and only if there are elements $\alpha_{1},...,\alpha_{n}$ in $G_{\U}$ such that $\varphi(\alpha_{1},...,\alpha_{n})$ holds. E.g., every set $A$ in $\U$ contains an arithmetic progression of length 7 if and only if $G_{\U}$ contains an arithmetic progression of length 7.\\

Since, in the following, we use also operations between ultrafilters, we recall a few definitions about the space $\beta\N$ (for a complete tractation of this space, we suggest \cite{rif12}):
\begin{defn} $\beta\N$ is the space of ultrafilters on $\N$, endowed with the topology generated by the family $\langle \Theta_{A}\mid A\subseteq\N\rangle$, where
\begin{center} $\Theta_{A}=\{\U\in\bN\mid A\in\U\}$. \end{center}
An ultafilter $\U\in\bN$ is called {\bfseries principal} if there exists a natural number $n\in\N$ such that $\U=\{A\subseteq\N\mid n\in A\}$.\\
Given two ultrafilters $\U,\V$, $\U\oplus\V$ is the ultrafilter such that, for every set $A\subseteq\N$, 
\begin{center} $A\in\U\oplus\V\Leftrightarrow \{n\in\N\mid\{m\in\N\mid n+m\in A\}\in\V\}\in\U$. \end{center}
Similarly, $\U\odot\V$ is the ultrafilter such that, for every set $A\subseteq\N$, 
\begin{center} $A\in\U\odot\V\mid \{n\in\N\mid\{m\in\N\mid n\cdot m\in A\}\in\V\}\in\U$. \end{center}
An ultrafilter $\U$ is an {\bfseries additive idempotent} if $\U=\U\oplus\U$; similarly, $\U$ is a {\bfseries multiplicative idempotent} if $\U=\U\odot\U$.\end{defn}
To study ultrafilters from a nonstandard point of view we need to translate the operations $\oplus,\odot$ and the notion of idempotent ultrafilter in terms of generators. These translations involve the iteration of the star map, which is possible in single superstructure models $\langle \mathbb{V}(X),\mathbb{V}(X),*\rangle$ of nonstandard methods:
\begin{defn} For every natural number $n$ we define the function 
\begin{equation*} S_{n}:\mathbb{V}(X)\rightarrow\mathbb{V}(X)\end{equation*}
by setting 
\begin{equation*}S_{1}=*\end{equation*}
and, for $n\geq 1$, 
\begin{equation*}S_{n+1}=*\circ S_{n}.\end{equation*}
\end{defn}
\begin{defn} Let $\langle\mathbb{V}(X),\mathbb{V}(X),*\rangle$ be a single superstructure model of nonstandard methods. We call {\bfseries $\omega$-hyperextension} of $\N$, and we denote by $^{\bullet}\N$, the union of all the hyperextensions $S_{n}(\N)$:
\begin{center} $^{\bullet}\N=\bigcup\limits_{n\in\mathbb{N}} S_{n}(\N)$. \end{center}
\end{defn}
Observe that, as a consequence of the Elementary Chain Theorem, $^{\bullet}\N$ is a nonstandard extension of $\N$.\\
To the elements of $^{\bullet}\N$ is associated a notion of "height":
\begin{defn} Let $\alpha\in$$^{\bullet}\N\setminus\N$. The {\bfseries height} of $\alpha$ $($denoted by $h(\alpha))$ is the least natural number $n$ such that $\alpha\in S_{n}(\N)$. \end{defn}
By convention we set $h(\alpha)=0$ if $\alpha\in\N$. We observe that, for every $\alpha\in$$^{\bullet}\N\setminus\N$ and for every natural number $n\in\N$, $h(S_{n}(\alpha))=h(\alpha)+n$, and that, by definition of height, for every subset $A$ of $\N$ and every element $\alpha\in$$^{\bullet}\N$, $\alpha\in$$^{\bullet}A$ if and only if $\alpha\in S_{h(\alpha)}(A)$.\\
A fact that we will often use is that, for every polynomial $P(x_{1},...,x_{n})$ and every $\iota_{P}$-ultrafilter $\U$, there exists in $G_{\U}$ a solution $\alpha_{1},...,\alpha_{n}$ to the equation $P(x_{1},...,x_{n})=0$ with $h(\alpha_{i})=1$ for all $i\leq n$:
\begin{lem}[Reduction Lemma] Let $P(x_{1},...,x_{n})$ be a polynomial, and $\U$ a $\iota_{P}$-ultrafilter. Then there are mutually distinct elements $\alpha_{1},...,\alpha_{n}\in G_{\U}\cap$$^{*}\N$ such that $P(\alpha_{1},...,\alpha_{n})=0$. \end{lem}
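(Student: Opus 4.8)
The plan is to start from a $\iota_P$-ultrafilter $\U$ and, by the Polynomial Bridge Theorem, obtain mutually distinct $\beta_1,\dots,\beta_n\in G_\U$ with $P(\beta_1,\dots,\beta_n)=0$. These live in $^{\bullet}\N$ at possibly large heights, and the goal is to replace them by generators of $\U$ of height exactly one. The natural device is the iterated star map together with the fact that $*$ is an elementary embedding (transfer): if $\alpha$ has height $h$, then applying $S_{h-1}$ or, more robustly, choosing a height-one ``witness'' inside $^{*}\N$ should preserve both the polynomial relation and the property of generating $\U$. Concretely, the key observation is that for any $\alpha\in G_\U$, we have $A\in\U \iff \alpha\in {}^{\bullet}A \iff \alpha\in S_{h(\alpha)}(A)$, and one wants to descend this height.

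First I would record the basic compatibility facts: for a polynomial $P$ with integer coefficients, $P(S_1(\gamma_1),\dots,S_1(\gamma_n)) = S_1(P(\gamma_1,\dots,\gamma_n))$ because $*$ commutes with the ring operations (each is the $*$-image of the corresponding operation on $\Z$, and $*$ preserves application of functions); hence $P(\gamma_1,\dots,\gamma_n)=0$ implies $P(S_1(\gamma_1),\dots,S_1(\gamma_n))=0$ and conversely since $S_1$ is injective. Likewise $S_1$ preserves and reflects the relation $\neq$, so mutual distinctness is preserved in both directions. The second ingredient is that $S_1$ carries generators of $\U$ to generators of $S_1(\U)$ (the ultrafilter $S_1$ pushes $\U$ to): if $\alpha\in G_\U\cap {}^{*}\N$ has height one, then $A\in\U\iff\alpha\in{}^{*}A$, and one checks $S_1(\alpha)$ generates $\U^{(1)} := \{B : \alpha\in{}^{*}B\text{ in the next layer}\}$ — but what we actually need runs the other way: we must go down, not up.

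So the real argument is a downward one. Take $\beta_1,\dots,\beta_n\in G_\U$ at heights $\le N$ with $P(\bar\beta)=0$, all distinct. Consider the statement, internal at level $N-1$ or expressible by transfer, asserting the existence of a tuple in $\big(S_{N-1}(\N)\big)^n$ — more precisely: the sentence ``$\exists x_1,\dots,x_n\ \big(\bigwedge_{i\ne j}x_i\ne x_j \wedge P(x_1,\dots,x_n)=0 \wedge \bigwedge_i x_i\in S_{N-1}(A)\big)$'' holds at the top level for every $A\in\U$ (witnessed by $\bar\beta$ via $\beta_i\in S_{h(\beta_i)}(A)\subseteq$ the appropriate set, after possibly re-expressing). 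Since this sentence is the $S_1$-image of a sentence one level down, by transfer (injectivity/elementarity of $S_1$, i.e. the Elementary Chain setup) it already holds one level down, and one iterates $N-1$ times to land in $^{*}\N$ with a tuple $\alpha_1,\dots,\alpha_n$ that is distinct, solves $P=0$, and satisfies $\alpha_i\in{}^{*}A$ for all $A\in\U$ — i.e. $\alpha_i\in G_\U\cap{}^{*}\N$. An alternative, perhaps cleaner, route: apply the Polynomial Bridge Theorem's proof directly but intersect the sets $^{*}S_A$ inside $^{*}\N$ rather than in $^{\bullet}\N$, using that $\mathfrak{c}^+$-enlarging already holds for the single $*$; since each $S_A\subseteq\N^n$ and each $^{*}S_A\subseteq{}^{*}(\N^n)=({}^{*}\N)^n$, the finite-intersection family $\{{}^{*}S_A\}_{A\in\U}$ has nonempty intersection living in $({}^{*}\N)^n$, giving height-one generators straightaway.

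The main obstacle I expect is bookkeeping the heights and making the transfer step rigorous: one must be careful that the descent sentence is genuinely the $S_1$-image of a well-formed bounded sentence at the lower level (the parameter $S_{N-1}(A)$ must be correctly placed), and that ``$\alpha_i\in{}^{*}A$ for every $A\in\U$'' really does pin down $\mathfrak U_{\alpha_i}=\U$ rather than some coarser filter — this uses that $\U$ is an ultrafilter, so $\mathfrak U_{\alpha_i}\supseteq\U$ forces equality. If one instead uses the direct enlarging-property argument in $^{*}\N$, the only thing to verify is that the sets $S_A$ and their finite intersections are correctly identified, which is exactly as in the proof of Theorem \ref{PBT} with $^{\bullet}\N$ replaced by $^{*}\N$ throughout.
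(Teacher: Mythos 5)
Your second, ``cleaner'' route is exactly the paper's proof: the paper disposes of the lemma in one line by applying the Polynomial Bridge Theorem to the hyperextension $^{*}\N\subseteq{}^{\bullet}\N$, i.e.\ running the finite-intersection/$\mathfrak{c}^{+}$-enlarging argument on the sets $S_{A}$ inside $^{*}\N$ rather than $^{\bullet}\N$. Your first, descent-by-transfer argument is both redundant and slightly gapped: transferring the existential sentence downward level by level yields, for each fixed $A\in\U$ separately, \emph{some} distinct solution in $({}^{*}A)^{n}$ (which is essentially just the definition of $\iota_{P}$-ultrafilter restated), and to glue these into a single tuple lying in $G_{\U}=\bigcap_{A\in\U}{}^{*}A$ you must invoke the enlarging property anyway --- at which point you are back to the second route.
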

\begin{proof} It is sufficient to apply the Polynomial Bridge Theorem to $^{*}\N\subseteq$$^{\bullet}\N$. \end{proof}
Two observations: first of all, the analogue result holds if $\U$ is just a $\sigma_{P}$-ultrafilter; furthermore, for every natural number $m>1$ there are mutually distinct elements of height $m$ in $G_{\U}$ that form a solution to $P(x_{1},...,x_{n})$: if $\alpha_{1},...,\alpha_{n}$ are given by the Reduction Lemma, we just have to take $S_{m-1}(\alpha_{1}),...,S_{m-1}(\alpha_{n})$.\\
The hyperextension $^{\bullet}\N$ provides an useful framework to translate the operations of sum and product between ultrafilters:
\begin{prop}\label{sum} Let $\alpha,\beta\in$$^{\bullet}\N$, $\U=\mathfrak{U}_{\alpha}$ and $\V=\mathfrak{U}_{\beta}$, and let us suppose that $h(\alpha)=h(\beta)=1$. Then:
\begin{enumerate}
	\item for every natural number $n$, $\mathfrak{U}_{\alpha}=\mathfrak{U}_{S_{n}(\alpha)}$;
	\item $\alpha+$$^{*}\beta\in G_{\U\oplus\V}$;
	\item $\alpha\cdot$$^{*}\beta\in\ G_{\U\odot\V}.$
\end{enumerate}
\end{prop}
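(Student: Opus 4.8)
The plan is to read off all three identities directly from the definitions, the only structural input being that each iterate $S_{n}$ of the star map is an elementary embedding of $\mathbb{V}(X)$ into itself — so that transfer holds for bounded formulas \emph{with arbitrary parameters in $\mathbb{V}(X)$}, internal ones included — together with the behaviour of heights recalled above. For (1), fix $A\subseteq\N$. By definition $A\in\mathfrak{U}_{\alpha}$ iff $\alpha\in{}^{*}A=S_{1}(A)$, while $h(S_{n}(\alpha))=h(\alpha)+n=n+1$ gives $A\in\mathfrak{U}_{S_{n}(\alpha)}$ iff $S_{n}(\alpha)\in S_{n+1}(A)$. Since composition of $*$ with itself is associative, $S_{n+1}=S_{n}\circ *$, so $S_{n+1}(A)=S_{n}({}^{*}A)$; and as $S_{n}$ preserves and reflects $\in$, we have $\alpha\in{}^{*}A$ iff $S_{n}(\alpha)\in S_{n}({}^{*}A)=S_{n+1}(A)$. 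Concatenating the equivalences yields $\mathfrak{U}_{\alpha}=\mathfrak{U}_{S_{n}(\alpha)}$.

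For (2), fix $A\subseteq\N$ and set $E_{n}=\{m\in\N\mid n+m\in A\}$, so that by definition of $\oplus$ we have $A\in\U\oplus\V$ iff $P\in\U$, where $P=\{n\in\N\mid E_{n}\in\V\}$. First I would eliminate the inner ultrafilter: for a fixed $n\in\N$, transfer of the definition of $E_{n}$ (a plain transfer, with the standard parameter $n$) gives ${}^{*}E_{n}=\{\mu\in{}^{*}\N\mid n+\mu\in{}^{*}A\}$, hence $E_{n}\in\V$ iff $\beta\in{}^{*}E_{n}$ iff $n+\beta\in{}^{*}A$, so that $P=\{n\in\N\mid n+\beta\in{}^{*}A\}$. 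Now I would apply the star map to \emph{this} description of $P$: the sentence ``$\forall x\,\bigl(x\in P\leftrightarrow(x\in\N\wedge x+\beta\in{}^{*}A)\bigr)$'' is true in $\mathbb{V}(X)$, and starring it — replacing each parameter by its star, so $\N\mapsto{}^{*}\N$, $\beta\mapsto{}^{*}\beta$ and ${}^{*}A\mapsto{}^{*}({}^{*}A)=S_{2}(A)$ — yields ${}^{*}P=\{\nu\in{}^{*}\N\mid\nu+{}^{*}\beta\in S_{2}(A)\}$. Since $\U=\mathfrak{U}_{\alpha}$ with $\alpha\in{}^{*}\N$, this gives $A\in\U\oplus\V$ iff $\alpha\in{}^{*}P$ iff $\alpha+{}^{*}\beta\in S_{2}(A)$. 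Finally $\alpha+{}^{*}\beta\in S_{2}(\N)$, and $h(\alpha+{}^{*}\beta)=2$ — it cannot be $\le 1$, for then ${}^{*}\beta=(\alpha+{}^{*}\beta)-\alpha$ would lie in $S_{1}(\Z)$, contradicting $h({}^{*}\beta)=2$ — so by the height characterisation $\alpha+{}^{*}\beta\in{}^{\bullet}A$ iff $\alpha+{}^{*}\beta\in S_{2}(A)$. Putting the chain together, $A\in\U\oplus\V$ iff $A\in\mathfrak{U}_{\alpha+{}^{*}\beta}$, i.e. $\alpha+{}^{*}\beta\in G_{\U\oplus\V}$.

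For (3) I would repeat the argument of (2) verbatim with the product in place of the sum, obtaining $P=\{n\in\N\mid n\cdot\beta\in{}^{*}A\}$ and then ${}^{*}P=\{\nu\in{}^{*}\N\mid\nu\cdot{}^{*}\beta\in S_{2}(A)\}$; a similar (slightly more careful, working in ${}^{\bullet}\Q$ and using $\alpha,{}^{*}\beta\ne 0$) check gives $h(\alpha\cdot{}^{*}\beta)=2$, so $\alpha\cdot{}^{*}\beta\in G_{\U\odot\V}$.

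The only point that is more than bookkeeping is the application of the star map to the description $P=\{n\in\N\mid n+\beta\in{}^{*}A\}$ in step (2). This is legitimate precisely because in a single superstructure model $*$ is an elementary self-embedding of $\mathbb{V}(X)$, so transfer is available for sentences mentioning the internal parameters $\beta$ and ${}^{*}A$, not just standard ones; and it is exactly this that makes the outer generator come out as $\alpha$ rather than as ${}^{*}\alpha$. If instead one transferred the ``two-layer'' description $P=\{n\mid E_{n}\in\V\}$ and only afterwards tried to unpack ${}^{*}\V$, one would be pushed into starring $\alpha$ as well — and, since the naive ``level-one bridge'' $C\in{}^{*}\V\Leftrightarrow{}^{*}\beta\in{}^{*}C$ fails for internal $C$, that route does not close.
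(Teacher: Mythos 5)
Your proof is correct: the key move --- rewriting $P$ as $\{n\in\N\mid n+\beta\in{}^{*}A\}$ so that the star map is applied to a description involving only the internal parameters $\beta$ and ${}^{*}A$ (legitimate in a single superstructure model, where transfer holds for bounded formulas with arbitrary parameters of $\mathbb{V}(X)$) --- is exactly the mechanism that makes the outer generator come out as $\alpha$ rather than ${}^{*}\alpha$, and the height bookkeeping for $\alpha+{}^{*}\beta$ and $\alpha\cdot{}^{*}\beta$ is handled properly. The paper does not prove Proposition \ref{sum} itself but defers to \cite{rif84} and \cite{Tesi}, where the argument is essentially the one you give, so there is nothing to flag.
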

\begin{proof} These results have been proved in \cite{rif84} and in \cite{Tesi}, Chapter 2.\end{proof}
{\bfseries Remark:} In Proposition \ref{sum} we supposed, for the sake of simplicity, that $h(\alpha)= h(\beta)=1$. If we drop this hypothesis, the thesis in point (2) becomes 
\begin{equation*} \alpha+S_{h(\alpha)}(\beta)\in G_{\U\oplus\V} \end{equation*}
and, in point (3), the thesis becomes 
\begin{equation*} \alpha\cdot S_{h(\alpha)}(\beta)\in G_{\U\odot\V}. \end{equation*}
Here arises a question: can a similar result be obtained for generical hyperextensions of $\N$ (with this we mean an hyperextension where the iteration of the star map is not allowed)? The answer is: yes and no.\\

{\bfseries Yes:} As Puritz proved in (\cite{rif16}, Theorem 3.4), in each hyperextension that satisfies the $\mathfrak{c}^{+}$-enlarging property we can characterize the set of generators of the tensor product $\U\otimes\V$ in terms of $G_{\U},G_{\V}$ for every ultrafilter $\U$ and $\V$, where $\U\otimes\V$ is the ultrafilter on $\N^{2}$ defined as follows:
\begin{center} $\forall A\subseteq\N^{2}, A\in\U\otimes\V\Leftrightarrow\{n\in\N\mid\{m\in\N\mid (n,m)\in A\}\in\V\}\in\U$. \end{center}
\begin{thm}[Puritz]\label{Puritz} Let $^{*}\N$ be a hyperextension of $\N$ with the $\mathfrak{c}^{+}$-enlarging property. For every ultrafilter $\U,\V$ on $\N$, 
\begin{center} $G_{\U\otimes\V}$=$\{(\alpha,\beta)\in$$^{*}\N^{2}\mid \alpha\in G_{\U}, \beta\in G_{\V}, \alpha<er(\beta)\}$, \end{center} 
where
\begin{center} $er(\beta)=\{$$^{*}f(\beta)\mid f\in \mathtt{Fun}(\N,\N),$$^{*}f(\beta)\in$$^{*}\N\setminus\N\}$. \end{center}
\end{thm}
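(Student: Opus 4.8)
The plan is to establish the asserted equality as two inclusions, both by routine transfer arguments: the $\mathfrak{c}^{+}$-enlarging property is only needed to know that both sides are nonempty (every ultrafilter has a generator), while the set-theoretic identity itself uses nothing beyond the definitions of $G_{\U}$, of $\otimes$, and of the ultrafilter $\mathfrak{U}_{\gamma}$ generated by a point $\gamma$.

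For the inclusion $G_{\U\otimes\V}\subseteq\{(\alpha,\beta)\mid\alpha\in G_{\U},\ \beta\in G_{\V},\ \alpha<er(\beta)\}$, I would test a generator $(\alpha,\beta)$ against three well-chosen members of $\U\otimes\V$. Since $B\times\N\in\U\otimes\V$ for every $B\in\U$ (its $n$-th section is $\N$ or $\emptyset$ according as $n\in B$), transfer applied to $(\alpha,\beta)\in{}^{*}(B\times\N)={}^{*}B\times{}^{*}\N$ gives $\alpha\in{}^{*}B$ for all $B\in\U$, i.e. $\alpha\in G_{\U}$; symmetrically, using $\N\times B$ for $B\in\V$, we get $\beta\in G_{\V}$. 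For the $er$-condition, fix $f\in\mathtt{Fun}(\N,\N)$ with ${}^{*}f(\beta)\in{}^{*}\N\setminus\N$ and put $A=\{(n,m)\mid n<f(m)\}$; because ${}^{*}f(\beta)$ exceeds every standard integer, $\{m\mid n<f(m)\}\in\V$ for every $n\in\N$, hence $A\in\U\otimes\V$, and transferring the defining formula of $A$ at $(\alpha,\beta)$ yields $\alpha<{}^{*}f(\beta)$. As $f$ was arbitrary, $\alpha<er(\beta)$.

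For the reverse inclusion, take $\alpha\in G_{\U}$, $\beta\in G_{\V}$, $\alpha<er(\beta)$, and an arbitrary $A\in\U\otimes\V$; the goal is $(\alpha,\beta)\in{}^{*}A$, equivalently $\beta\in({}^{*}A)_{\alpha}$, where for $\nu\in{}^{*}\N$ the internal set $({}^{*}A)_{\nu}=\{\mu\mid(\nu,\mu)\in{}^{*}A\}$ is obtained by transferring the section map $n\mapsto A_{n}$. First I would normalize $A$: with $D=\{n\mid A_{n}\in\V\}\in\U$, replace $A$ by the set $\widetilde A$ agreeing with $A$ on $D\times\N$ and equal to $\N$ on every other column. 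Then every section of $\widetilde A$ lies in $\V$, so $\widetilde A\in\U\otimes\V$; and since $\alpha\in{}^{*}D$, transfer of ``$A$ and $\widetilde A$ agree on $D\times\N$'' shows $(\alpha,\beta)\in{}^{*}A\iff(\alpha,\beta)\in{}^{*}\widetilde A$, so we may assume $A_{n}\in\V$ for every $n$. Now suppose for contradiction that $\beta\notin({}^{*}A)_{\alpha}$ and define $f\colon\N\to\N$ by $f(m)=\min\{n\mid m\notin A_{n}\}$ (and $f(m)=0$ if no such $n$ exists). Transfer gives ${}^{*}f(\beta)=\min\{\nu\mid\beta\notin({}^{*}A)_{\nu}\}$, which is $\le\alpha$ by the contradiction hypothesis; moreover ${}^{*}f(\beta)$ cannot be a standard $n_{0}$, for that would force $\beta\notin{}^{*}(A_{n_{0}})$ while $A_{n_{0}}\in\V=\mathfrak{U}_{\beta}$ gives $\beta\in{}^{*}(A_{n_{0}})$. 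Hence ${}^{*}f(\beta)\in er(\beta)$ with ${}^{*}f(\beta)\le\alpha$, contradicting $\alpha<er(\beta)$.

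The only step requiring real thought is this last one. The naive attempt to read off $\beta\in({}^{*}A)_{\alpha}$ from the standard facts ``$A_{n}\in\V$ for $n\in D$'' fails because those facts sit at standard indices whereas $\alpha$ is nonstandard; what rescues it is the combination of the normalization (which discards the irrelevant columns and makes \emph{every} section $\V$-large) with the ``first missing row'' function $f$, engineered so that a hypothetical failure $\beta\notin({}^{*}A)_{\alpha}$ manufactures a standard function attaining an infinite value $\le\alpha$ at $\beta$ — precisely what $\alpha<er(\beta)$ forbids. Finally I would dispatch the degenerate cases where $\U$ or $\V$ is principal: there $er(\beta)$ is empty or $\alpha<er(\beta)$ holds trivially, and the claimed equality can be verified by hand; they are consistent with the general argument but worth an explicit line.
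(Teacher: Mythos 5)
The paper does not actually prove this statement: it is quoted as Theorem 3.4 of Puritz's \emph{Skies, Constellations and Monads} and used as a black box, so there is no internal proof to compare yours against. Judged on its own, your argument is correct and complete, and it is essentially the standard (indeed Puritz's own) proof: the forward inclusion by testing a generator against the sets $B\times\N$, $\N\times B$ and $\{(n,m)\mid n<f(m)\}$, and the reverse inclusion by normalizing the columns outside $D=\{n\mid A_{n}\in\V\}$ and then using the ``first missing row'' function to turn a failure of $(\alpha,\beta)\in{}^{*}A$ into an element of $er(\beta)$ lying below $\alpha$. Two cosmetic points: since this paper takes $\N=\{1,2,\dots\}$, the default value of your function $f$ should be $1$ rather than $0$ so that $f\in\mathtt{Fun}(\N,\N)$; and when you invoke ${}^{*}f(\beta)=\min\{\nu\mid(\nu,\beta)\notin{}^{*}A\}$ you should say explicitly that this internal set is nonempty (it contains $\alpha$ under the contradiction hypothesis), which is what licenses the transfer of the minimum. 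Your closing remark about the principal cases is right but not needed, as the general argument covers them.
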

If we denote by $S:\N^{2}\rightarrow\N$ the operation of sum on $\N$ and by $\hat{S}:\bN^{2}\rightarrow\bN^{2}$ its extension to $\bN$, we have that $\U\oplus\V=\hat{S}(\U\otimes\V)$. So by Puritz's Theorem it follows that
\begin{center} $G_{\U\oplus\V}=\{\alpha+\beta\mid \alpha\in G_{\U}, \beta\in G_{\V}, \alpha<er(\beta)\}$. \end{center}
{\bfseries No:} The characterization given by Theorem \ref{Puritz} is, somehow, "implicit": Proposition \ref{sum} gives a procedure to construct, given $\alpha\in G_{\U}$ and $\beta\in G_{\V}$, an element $\gamma\in G_{\U\oplus\V}$ related to both $\alpha$ and $\beta$, and this fact does not hold for Theorem \ref{Puritz}.\\

An important corollary of Proposition \ref{sum} is that we can easily characterize the idempotent ultrafilters in the nonstandard setting:
\begin{prop}\label{idultragen} Let $\U\in\bN$. Then:
\begin{enumerate}
	\item $\U\oplus\U=\U\Leftrightarrow\forall\alpha,\beta\in G_{\U}\cap$$^{*}\N$ $\alpha+$$^{*}\beta\in G_{\U}\Leftrightarrow\exists\alpha,\beta\in G_{\U}\cap$$^{*}\N$ $\alpha+$$^{*}\beta\in G_{\U}$;
	\item $\U\odot\U=\U\Leftrightarrow\forall\alpha,\beta\in G_{\U}\cap$$^{*}\N$ $\alpha+$$^{*}\beta\in G_{\U}\Leftrightarrow\exists\alpha,\beta\in G_{\U}\cap$$^{*}\N$ $\alpha\cdot$$^{*}\beta\in G_{\U}$.
\end{enumerate}
\end{prop}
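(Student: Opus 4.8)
The plan is to establish, for item (1), the cycle of implications
\[
\U\oplus\U=\U \ \Longrightarrow\ (\forall) \ \Longrightarrow\ (\exists) \ \Longrightarrow\ \U\oplus\U=\U ,
\]
where $(\forall)$ and $(\exists)$ abbreviate the two quantified statements of the first line, and then to prove item (2) by the verbatim argument with $\oplus$, $+$ and Proposition \ref{sum}(2) replaced by $\odot$, $\cdot$ and Proposition \ref{sum}(3) (reading the middle clause of item (2) as the multiplicative one, namely $\exists\,\alpha,\beta\in G_{\U}\cap{}^{*}\N$ with $\alpha\cdot{}^{*}\beta\in G_{\U}$, together with its universal companion). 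Two standing facts will be used throughout. First, every $\U\in\bN$ has a generator of height $1$: applying the Proposition that produces, in a $\mathfrak{c}^{+}$-enlarging hyperextension, a generator for each ultrafilter to ${}^{*}\N\subseteq{}^{\bullet}\N$, we get $G_{\U}\cap{}^{*}\N\neq\emptyset$. Second, since $\gamma\mapsto\mathfrak{U}_{\gamma}$ is single-valued on ${}^{\bullet}\N$, the generator sets of distinct ultrafilters are disjoint, so any $\gamma\in G_{\U}\cap G_{\V}$ forces $\U=\V$.

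For the first implication of item (1), assume $\U\oplus\U=\U$ and let $\alpha,\beta\in G_{\U}\cap{}^{*}\N$ be arbitrary. Then $h(\alpha)=h(\beta)=1$, so Proposition \ref{sum}(2), taken with $\V=\U$, yields $\alpha+{}^{*}\beta\in G_{\U\oplus\U}=G_{\U}$; this is exactly $(\forall)$. The implication $(\forall)\Rightarrow(\exists)$ is immediate because $G_{\U}\cap{}^{*}\N\neq\emptyset$, so the universal conclusion is actually witnessed. For the last implication, choose $\alpha,\beta\in G_{\U}\cap{}^{*}\N$ with $\alpha+{}^{*}\beta\in G_{\U}$; Proposition \ref{sum}(2) also gives $\alpha+{}^{*}\beta\in G_{\U\oplus\U}$, so $G_{\U}\cap G_{\U\oplus\U}\neq\emptyset$, whence $\U=\U\oplus\U$ by the disjointness fact. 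This closes the cycle, and item (2) follows by the same three steps using Proposition \ref{sum}(3) in place of Proposition \ref{sum}(2).

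A few words on why nothing is hidden here. No distinctness hypothesis on $\alpha,\beta$ is required — one may even take $\beta=\alpha$ — and Proposition \ref{sum} applies precisely because the generators are chosen of height $1$. Note that $\alpha+{}^{*}\beta$ has height $2$, hence need not belong to $G_{\U}\cap{}^{*}\N$, but this is harmless: only membership in $G_{\U}$ is ever used. The whole content of the statement is carried by Proposition \ref{sum}, and the only points meriting a moment's attention are the bookkeeping of heights (to legitimately invoke Proposition \ref{sum}) and the trivial but essential remark that two distinct ultrafilters cannot share a generator. I do not anticipate a genuine obstacle.
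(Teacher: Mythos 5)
Your proof is correct and follows exactly the route the paper intends: the paper's own proof is just the one-line remark that the claim ``follows easily by points (2) and (3) of Proposition \ref{sum}'', and your cycle of implications (using $G_{\U}\cap{}^{*}\N\neq\emptyset$ for $(\forall)\Rightarrow(\exists)$ and the uniqueness of $\mathfrak{U}_{\gamma}$ for $(\exists)\Rightarrow$ idempotency) is the natural fleshing-out of that remark. You also correctly read the obvious typo in the middle clause of item (2) as the multiplicative statement.
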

\begin{proof} The thesis follows easily by points (2) and (3) of Proposition \ref{sum}.\end{proof}
In \cite{rif84} these characterizations of idempotent ultrafilters are used to prove some results in combinatorics, in particular a "constructive" proof of (a particular case of) Rado's Theorem.\\
In the next two sections we show how the nonstandard approach to ultrafilters can be used to prove the partition regularity of particular nonlinear polynomials.
\section{Partition Regularity for a Class of l.e.v. Polynomials}
In \cite{rif6}, P. Csikv\'ari, K. Gyarmati and A. S\'arközy posed the following question (that we reformulate with the terminology introduced in section 2): is the polynomial 
\begin{center} $P(x_{1},x_{2},x_{3},x_{4})=x_{1}+x_{2}-x_{3}x_{4}$ \end{center}
injectively partition regular? This problem was solved by Neil Hindman in \cite{rif13} as a particular case of Theorem \ref{ConsHind}, that we recall:
\begin{thm*} For every natural number $n,m\geq 1$, with $n+m\geq 3$, the nonlinear polynomial $$\sum\limits_{i=1}^{n}x_{i}-\prod\limits_{j=1}^{m}y_{j}$$ is injectively partition regular. \end{thm*}
We start this section by proving the previous theorem using the nonstandard approach to ultrafilters introduced in section 2.\\
A key result in our approach to the partition regularity of polynomials is the following:
\begin{thm}\label{idultra} If $P(x_{1},...,x_{n})$ is an homogeneous injectively partition regular polynomial then there is a nonprincipal multiplicative idempotent $\iota_{P}$-ultrafilter. \end{thm}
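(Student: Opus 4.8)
The plan is to realize the desired ultrafilter as an idempotent of a suitable closed subsemigroup of $(\bN,\odot)$, with homogeneity being exactly the ingredient that produces the algebraic structure. Let $d\geq 1$ be the degree of $P$ and let $\mathcal{F}$ be the family of all $A\subseteq\N$ that contain an injective solution of $P$; this family is closed under supersets and, by hypothesis, partition regular, so by Theorem~\ref{ultra} the set
\[
K=\{\U\in\bN\mid \U\text{ is a }\iota_{P}\text{-ultrafilter}\}
\]
is nonempty. Moreover $K=\bigcap_{B\notin\mathcal{F}}(\bN\setminus\Theta_{B})$ is an intersection of clopen sets, hence closed in $\bN$.

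The key step is to show that $K$ is a left ideal of $(\bN,\odot)$. Let $\V\in K$ and $\U\in\bN$ be arbitrary, and let $A\in\U\odot\V$. By definition of $\odot$, the set $\{k\in\N\mid\{l\in\N\mid kl\in A\}\in\V\}$ lies in $\U$, so it is nonempty; pick $t$ in it, so that $t^{-1}A:=\{m\in\N\mid tm\in A\}\in\V$. Since $\V$ is a $\iota_{P}$-ultrafilter, $t^{-1}A$ contains mutually distinct $c_{1},\dots,c_{n}$ with $P(c_{1},\dots,c_{n})=0$. Then $tc_{1},\dots,tc_{n}\in A$ are again mutually distinct (as $t\neq 0$), and by homogeneity $P(tc_{1},\dots,tc_{n})=t^{d}P(c_{1},\dots,c_{n})=0$; hence $A\in\mathcal{F}$. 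As $A$ was arbitrary, $\U\odot\V\in K$, so $\bN\odot K\subseteq K$. In particular $K$ is a nonempty closed left ideal, hence a compact subsemigroup of the compact right-topological semigroup $(\bN,\odot)$, so by the Ellis--Numakura lemma it contains an idempotent $\U$. By construction $\U$ is a multiplicative idempotent and a $\iota_{P}$-ultrafilter.

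It remains to see that $\U$ is nonprincipal. A nonzero homogeneous polynomial in a single variable has the form $cx^{d}$ with $c\neq 0$, and $cx^{d}=0$ has no solution in $\N$; since $P$ is (injectively) partition regular, $P$ must therefore have at least two variables. On the other hand $\mathfrak{U}_{n}\odot\mathfrak{U}_{m}=\mathfrak{U}_{nm}$, so the only principal multiplicative idempotent on $\N=\{1,2,\dots\}$ is $\mathfrak{U}_{1}$; but $\{1\}\in\mathfrak{U}_{1}$ cannot contain two distinct elements, so $\mathfrak{U}_{1}$ is not a $\iota_{P}$-ultrafilter and $\mathfrak{U}_{1}\notin K$. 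Hence $\U$ is nonprincipal, as required.

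I expect the left-ideal computation to be the only real point: one has to notice that homogeneity is precisely what allows a solution living in $t^{-1}A$ to be scaled up into $A$ without losing injectivity, and that this is what forces $K$ to absorb products on the left; closedness of $K$, the Ellis--Numakura step, and nonprincipality are then routine. As an alternative, the same computation can be carried out in the nonstandard language of Section~2: given $\V\in K$, the Reduction Lemma provides mutually distinct $\beta_{1},\dots,\beta_{n}\in G_{\V}\cap{}^{*}\N$ with $P(\beta_{1},\dots,\beta_{n})=0$; if $\gamma$ generates a multiplicative idempotent then, by Proposition~\ref{sum}, $\gamma\cdot{}^{*}\beta_{1},\dots,\gamma\cdot{}^{*}\beta_{n}$ are mutually distinct generators of $\mathfrak{U}_{\gamma}\odot\V$, and by homogeneity they annihilate $P$, witnessing $\mathfrak{U}_{\gamma}\odot\V\in K$; combined with Proposition~\ref{idultragen} this reproduces the fact that $K$ is a closed semigroup containing an idempotent.
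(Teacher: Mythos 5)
Your proof is correct, and its skeleton coincides with the paper's: both identify the set $K=I_{P}$ of $\iota_{P}$-ultrafilters, observe that it is nonempty and closed, show that it is an ideal of $(\bN,\odot)$, and extract a multiplicative idempotent via Ellis' theorem. The one genuine difference is how the ideal property is verified. The paper stays in the nonstandard setting: it takes mutually distinct generators $\alpha_{1},\dots,\alpha_{n}\in G_{\U}\cap{}^{*}\N$ annihilating $P$, multiplies them by ${}^{*}\beta$ for $\beta$ a generator of $\V$, and uses Proposition \ref{sum} together with homogeneity to conclude that $\U\odot\V$ (and, symmetrically, $\V\odot\U$) is again a $\iota_{P}$-ultrafilter. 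You run the same scaling argument directly on sets: given $A\in\U\odot\V$ you pick $t$ with $t^{-1}A\in\V$, find an injective solution of $P$ there, and scale it by $t$ using homogeneity. Your version is more elementary --- it needs no iterated hyperextensions, and it settles for a one-sided ideal, which is all that Ellis' theorem requires --- whereas the paper's version is designed to showcase the generator calculus that powers the rest of the article; your closing remark in fact reconstructs the paper's computation verbatim. One small stylistic point: your nonprincipality argument via classifying the principal multiplicative idempotents is more roundabout than necessary. Since $P$ must have $n\geq 2$ variables (as you note), no principal ultrafilter whatsoever is a $\iota_{P}$-ultrafilter, because $\{k\}\in\mathfrak{U}_{k}$ cannot contain two distinct elements; this is the shortcut the paper takes, though your argument is also valid.
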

\begin{proof} Let 
\begin{center} $I_{P}=\{\U\in\bN\mid \U$ is a $\iota_{P}$-ultrafilter\}.\end{center}
We observe that $I_{P}$ is nonempty since $P(x_{1},...,x_{n})$ is partition regular. By the definition of $\iota_{P}$-ultrafilter, and by Theorem \ref{PBT}, it clearly follows that every ultrafilter in $\U$ is nonprincipal, since $|G_{\U}|=1$ for every principal ultrafilter.\\

{\bfseries Claim:} $I_{P}$ is a closed bilateral ideal in $(\bN,\odot)$.\\

If we prove the claim, the thesis follows by Ellis' Theorem (see \cite{rif9}).\\
$I_{P}$ is closed since, as it is known, for every property $P$ the set 
\begin{center} $\{\U\in\bN\mid \forall A\in\U \ \ A$ satisfies $P\}$ \end{center}
is closed.\\
$I_{P}$ is a bilateral ideal in $(\bN,\odot)$: let $\U$ be an ultrafilter in $I_{P}$, let $\alpha_{1},...,\alpha_{n}$ be mutually distinct elements in $G_{\U}\cap$$^{*}\N$ with $P(\alpha_{1},...,\alpha_{n})=0$ and let $\V$ be an ultrafilter in $\bN$. Let $\beta\in$$^{*}\N$ be a generator of $\V$.\\
By Proposition \ref{sum} it follows that $\alpha_{1}\cdot$$^{*}\beta,...,\alpha_{n}\cdot$$^{*}\beta$ are generators of $\U\odot\V$. They are mutually distinct and, since $P(x_{1},...,x_{n})$ is homogeneous, if $d$ is the degree of $P(x_{1},...,x_{n})$ then
\begin{center} $P(\alpha_{1}\cdot$$^{*}\beta,...,\alpha_{n}\cdot$$^{*}\beta)=$$^{*}\beta^{d} P(\alpha_{1},...,\alpha_{n})=0$.\end{center}
So $\U\odot\V$ is a $\iota_{P}$-ultrafilter, and hence it is in $I_{P}$.\\
The proof for $\V\odot\U$ is completely similar: in this case, we consider the generators $\beta\cdot$$^{*}\alpha_{1},...,\beta\cdot$$^{*}\alpha_{n}$, and we observe that
\begin{center} $P(\beta\cdot$$^{*}\alpha_{1},...,\beta\cdot$$^{*}\alpha_{n})=\beta^{d} P($$^{*}\alpha_{1},...,$$^{*}\alpha_{n})=0$ \end{center}
since, by transfer, if $P(\alpha_{1},...,\alpha_{n})=0$ then $P($$^{*}\alpha_{1},...,$$^{*}\alpha_{n})=0$.\\
So $I_{P}$ is a bilateral ideal, and this concludes the proof.\end{proof}
{\bfseries Remark:} Theorem \ref{idultra} is a particular case of Theorem 3.3.5 in \cite{Tesi}) which, roughly speaking, states that whenever we consider a first order open formula $\varphi(x_{1},...,x_{n})$ that is "multiplicatively invariant" (with this we mean that, whenever $\varphi(a_{1},...,a_{n})$ holds, for every natural number $m$ also $\varphi(m\cdot a_{1},..., m\cdot a_{n})$ holds) the set 
\begin{center} $I_{\varphi}=\{\U\in\bN\mid\forall A\in\U \ \ \exists a_{1},...,a_{n}$ such that $\varphi(a_{1},...,a_{n})$ holds$\}$ \end{center}
is a bilateral ideal in $(\bN,\odot)$. This, by Ellis's Theorem, entails that $I_{\varphi}$ contains a multiplicative idempotent ultrafilter (and we can prove that this ultrafilter can be taken to be nonprincipal). Similar results hold if $\varphi(x_{1},...,x_{n})$ is "additively invariant", and for other similar notions of invariance.\\

As a consequence of Theorem \ref{idultra}, we can reprove Theorem \ref{ConsHind}:
\begin{thm*} For every natural number $n,m\geq 1$, with $n+m\geq 3$, the nonlinear polynomial $$\sum\limits_{i=1}^{n}x_{i}-\prod\limits_{j=1}^{m}y_{j}$$ is injectively partition regular. \end{thm*}
\begin{proof} If $n\geq 2$, $m=1$, the polynomial is $\sum_{i=1}^{n} x_{i}-y$, and we can apply Rado's Theorem. If $n=1, m\geq 2$ the polynomial is $x-\prod\limits_{i=1}^{m}y_{i}$, and we can apply the multiplicative analogue of Rado's Theorem (Theorem \ref{RadoMoltiplicativo}).\\
So, we suppose $n\geq 2,$ $m\geq 2$ and we consider the polynomial 
\begin{center} $R(x_{1},...,x_{n},y)= \sum\limits_{i=1}^{n} x_{i}-y$. \end{center}
By Rado's Theorem, $R(x_{1},...,x_{n},y)$ is partition regular so, as we observed in section 2, since it is linear it is, in particular, injectively partition 
regular. It is also homogeneous, so there exists a multiplicative idempotent $\iota_{R}$-ultrafilter $\U$. Let $\alpha_{1},...,\alpha_{n},\beta$ be mutually distinct elements in $G_{\U}\cap$$^{*}\N$ with $\sum\limits_{i=1}^{n}\alpha_{i}-\beta=0$.\\
Now let
\begin{center} $\eta=\prod\limits_{j=1}^{m} S_{j}(\beta)$. \end{center}
For $i=1,...,n$ we set 
\begin{center} $\lambda_{i}=\alpha_{i}\cdot\eta$ \end{center}
and, for $j=1,...,m$, we set
\begin{center} $\mu_{j}=S_{j}(\beta)$. \end{center}
Now, for $i\leq n, j\leq m$ we set $x_{i}=\lambda_{i}$ and $y_{j}=\mu_{j}$. Since $\U$ is a multiplicative idempotent, all these elements are in $G_{\U}$. Also,
\begin{center} $\sum\limits_{i=1}^{n}\lambda_{i}-\prod\limits_{j=1}^{m}\mu_{j}=\eta(\sum\limits_{i=1}^{n}\alpha_{i}-\beta)=0$, \end{center}
and this shows that $\U$ is a $\iota_{P}$-ultrafilter. In particular $P(x_{1},...,x_{n},y_{1},...,y_{m})$ is injectively partition regular.\end{proof}
These ideas can be slightly modified to prove a more general result:
\begin{defn} Let $m$ be a positive natural number, and let $\{y_{1},...,y_{m}\}$ be a set of mutually distinct variables. For every finite set $F\subseteq\{1,..,m\}$ we denote by $Q_{F}(y_{1},...,y_{m})$ the monomial
\begin{center} $Q_{F}(y_{1},...,y_{m})=\begin{cases} \prod\limits_{j\in F} y_{j}, & \mbox{if  } F\neq \emptyset;\\ 1, & \mbox{if  } F=\emptyset.\end{cases}$ \end{center}
\end{defn}
\begin{thm}\label{lev} Let $n\geq 2$ be a natural number, let $R(x_{1},...,x_{n})=\sum\limits_{i=1}^{n} a_{i}x_{i}$ be a partition regular polynomial, and let $m$ be a positive natural number. Then, for every $F_{1},...,F_{n}\subseteq\{1,..,m\}$ $($with the request that, when $n=2$, $F_{1}\cup F_{2}\neq\emptyset)$, the polynomial
\begin{center} $P(x_{1},...,x_{n},y_{1},...,y_{m})=\sum\limits_{i=1}^{n} a_{i}x_{i}Q_{F_{i}}(y_{1},...,y_{m})$ \end{center}
is injectively partition regular. \end{thm}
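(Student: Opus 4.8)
The plan is to mimic the nonstandard reproof of Theorem \ref{ConsHind} given just above: I will produce a single ultrafilter $\U$ and exhibit mutually distinct $\lambda_{1},\dots,\lambda_{n},\mu_{1},\dots,\mu_{m}\in G_{\U}$ with $P(\lambda_{1},\dots,\lambda_{n},\mu_{1},\dots,\mu_{m})=0$; by the Polynomial Bridge Theorem \ref{PBT} this makes $\U$ a $\iota_{P}$-ultrafilter, hence $P$ injectively partition regular. First I reduce to the case where $R$ itself is injectively partition regular. If $n\geq 3$ this is automatic: a linear partition regular polynomial other than $x-y$ is injectively partition regular (recalled in Section 2.1) and $R$ has more than two variables. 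If $n=2$, Rado's Theorem \ref{Rado} forces $a_{1}+a_{2}=0$, so $R=a_{1}(x_{1}-x_{2})$ is never injectively partition regular; I handle this degenerate case separately below.

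Assume $R$ injectively partition regular. It is also homogeneous of degree $1$, so Theorem \ref{idultra} yields a nonprincipal multiplicative idempotent $\iota_{R}$-ultrafilter $\U$, and the Reduction Lemma gives mutually distinct $\alpha_{1},\dots,\alpha_{n}\in G_{\U}\cap{}^{*}\N$ with $\sum_{i=1}^{n}a_{i}\alpha_{i}=0$. Fix $\beta\in G_{\U}\cap{}^{*}\N$ and set $\mu_{j}:=S_{j}(\beta)$ for $j=1,\dots,m$; by Proposition \ref{sum}(1) each $\mu_{j}$ generates $\U$, and since $h(\mu_{j})=j+1$ the $\mu_{j}$ have pairwise distinct heights, so they are distinct from one another and from the height-$1$ elements $\alpha_{i}$. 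For $i\leq n$ put
\[ \lambda_{i}:=\alpha_{i}\cdot\prod_{j\in\{1,\dots,m\}\setminus F_{i}}\mu_{j} \]
(empty product $=1$). Each $\lambda_{i}$ is a product of generators of $\U$ of strictly increasing heights; since $\U$ is multiplicatively idempotent, such a product again lies in $G_{\U}$ (apply the Remark after Proposition \ref{sum} repeatedly, writing each new factor $S_{j}(\beta)$ as $S_{h}(S_{j-h}(\beta))$ with $h$ the current partial-product height, so that the partial product times $\mu_{j}$ lies in $G_{\U\odot\U}=G_{\U}$). Because $\{1,\dots,m\}\setminus F_{i}$ and $F_{i}$ partition $\{1,\dots,m\}$, taking $x_{i}=\lambda_{i}$ and $y_{j}=\mu_{j}$ gives
\[ P(\lambda_{1},\dots,\lambda_{n},\mu_{1},\dots,\mu_{m})=\sum_{i=1}^{n}a_{i}\,\lambda_{i}\prod_{j\in F_{i}}\mu_{j}=\left(\prod_{j=1}^{m}\mu_{j}\right)\sum_{i=1}^{n}a_{i}\alpha_{i}=0. \]

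The only genuinely fiddly step — and the one I expect to be the main obstacle — is checking that $\lambda_{1},\dots,\lambda_{n},\mu_{1},\dots,\mu_{m}$ are pairwise distinct, since distinct $\lambda_{i}$'s are products sharing many of the factors $\mu_{j}$. I would prove a "unique factorisation" statement for the tower $\beta,S_{1}(\beta),\dots,S_{m}(\beta)$: working in a fixed hyperextension, where $\cdot$ is cancellative (by transfer) and every infinite hypernatural is $\geq 2$, an equality $\alpha_{i}\prod_{j\in A}\mu_{j}=\alpha_{i'}\prod_{j\in B}\mu_{j}$ with $A,B\subseteq\{1,\dots,m\}$ forces, by comparing heights, that $A,B$ are simultaneously empty or that $\max A=\max B$; in the latter case cancel that $\mu_{j}$ and induct on $|A|+|B|$, the base case $A=B=\emptyset$ giving $\alpha_{i}=\alpha_{i'}$, hence $i=i'$ as the $\alpha_{i}$ were chosen distinct. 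The same bookkeeping gives $\lambda_{i}\neq\mu_{k}$: if $k\in\{1,\dots,m\}\setminus F_{i}$, cancelling $\mu_{k}$ from $\lambda_{i}=\mu_{k}$ leaves $\alpha_{i}\cdot(\cdots)=1$ with an infinite factor, impossible; if $k\in F_{i}$, the heights of $\lambda_{i}$ and $\mu_{k}$ differ. Hence all $n+m$ elements are distinct and Theorem \ref{PBT} applies.

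Finally, for $n=2$ with $R=a_{1}(x_{1}-x_{2})$: start from any nonprincipal multiplicative idempotent ultrafilter $\U$ (one exists by Ellis' Theorem applied to the closed subsemigroup $(\bN\setminus\N,\odot)$), fix $\theta\in G_{\U}\cap{}^{*}\N$ and the tower $\mu_{j}=S_{j}(\beta)$ as above, and set $\lambda_{1}:=\theta\prod_{j\in F_{2}}\mu_{j}$ and $\lambda_{2}:=\theta\prod_{j\in F_{1}}\mu_{j}$. Then $\lambda_{1}\prod_{j\in F_{1}}\mu_{j}=\theta\prod_{j\in F_{1}}\mu_{j}\prod_{j\in F_{2}}\mu_{j}=\lambda_{2}\prod_{j\in F_{2}}\mu_{j}$, so $P(\lambda_{1},\lambda_{2},\mu_{1},\dots,\mu_{m})=0$, all these elements lie in $G_{\U}$, and distinctness of $\lambda_{1},\lambda_{2}$ reduces (by the same cancellation argument) to $\prod_{j\in F_{2}}\mu_{j}\neq\prod_{j\in F_{1}}\mu_{j}$, i.e.\ to $F_{1}\neq F_{2}$ — which is precisely what the side condition on $F_{1},F_{2}$ when $n=2$ is there to guarantee. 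Then Theorem \ref{PBT} once more yields that $\U$ is a $\iota_{P}$-ultrafilter and that $P$ is injectively partition regular.
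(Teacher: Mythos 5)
Your treatment of the main case $n\geq 3$ is essentially the paper's own proof: the same multiplicative idempotent $\iota_{R}$-ultrafilter $\U$ supplied by Theorem \ref{idultra}, the same tower $\mu_{j}=S_{j}(\beta)$, and the same assignment $\lambda_{i}=\alpha_{i}\prod_{j\notin F_{i}}\mu_{j}$, so that $P(\lambda_{1},\dots,\lambda_{n},\mu_{1},\dots,\mu_{m})=(\prod_{j=1}^{m}\mu_{j})\,R(\alpha_{1},\dots,\alpha_{n})=0$. What you add, and the paper omits, is the verification that the $n+m$ values are mutually distinct: the paper simply declares the solution injective, whereas you prove it via heights and cancellativity. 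Your ``unique factorisation'' induction (equal heights force $\max A=\max B$, cancel, recurse) is correct, granting the small divisibility/elementarity fact that $h(\xi\zeta)=\max(h(\xi),h(\zeta))$ when the factors have distinct heights; likewise your explicit justification that $\lambda_{i}\in G_{\U}$ by iterating the Remark after Proposition \ref{sum} is correct and more careful than the paper's one-line appeal to idempotency.

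You genuinely diverge on $n=2$: the paper dismisses this case by viewing $P$ as an instance of the multiplicative Rado equation (Theorem \ref{RadoMoltiplicativo}) in at least three variables, while you build an explicit solution $\lambda_{1}=\theta\prod_{j\in F_{2}}\mu_{j}$, $\lambda_{2}=\theta\prod_{j\in F_{1}}\mu_{j}$ in the monad of an arbitrary nonprincipal multiplicative idempotent. The construction is fine, but it needs $F_{1}\neq F_{2}$, and your claim that this is ``precisely what the side condition guarantees'' is wrong as stated: the hypothesis is $F_{1}\cup F_{2}\neq\emptyset$, which allows $F_{1}=F_{2}\neq\emptyset$. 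In that case $P=a_{1}(x_{1}-x_{2})\prod_{j\in F_{1}}y_{j}$ has no injective solution at all, so the theorem as literally written fails there --- and the paper's own reduction to Theorem \ref{RadoMoltiplicativo} (which requires the two sides of the equation to involve disjoint variables) also breaks down. In effect your argument has located the hypothesis that actually makes the $n=2$ case true, namely $F_{1}\neq F_{2}$; state that explicitly rather than attributing it to the given side condition. (Minor typo: the tower in that paragraph should be $\mu_{j}=S_{j}(\theta)$, not $S_{j}(\beta)$.)
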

\begin{proof} If $n=2$, since in this case we supposed that at least one of the monomials has degree greater than one, we are in a particular case of the multiplicative analogue of Rado's Theorem with at least three variables, and this ensures that the polynomial is injectively partition regular. Hence we can suppose, from now on, $n\geq 3$.\\
Since $R(x_{1},...,x_{n})$ is linear (so, in particular, it is homogeneous) and partition regular, by Theorem \ref{idultra} it follows that there is a nonprincipal multiplicative idempotent $\iota_{R}$-ultrafilter $\U$. Let $\alpha_{1},...,\alpha_{n}\in$$^{*}\N$ be mutually distinct generators of $\U$ such that $R(\alpha_{1},...,\alpha_{n})=0$, and let $\beta\in$$^{*}\N$ be any generator of $\U$. For every index $j\leq m$, we set
\begin{center} $\beta_{j}=S_{j}(\beta)\in G_{\U}$. \end{center}
We observe that, for every index $j\leq m$, $\beta_{j}\in G_{\U}$. We set, for every index $i\leq n$,
\begin{center} $\eta_{i}=\alpha_{i}\cdot (\prod\limits_{j\notin F_{i}} \beta_{j})$. \end{center}
Since $\U$ is a multiplicative idempotent, $\eta_{i}\in G_{\U}$ for every index $i\leq n$.\\

{\bfseries Claim:} $P(\eta_{1},...,\eta_{n},\beta_{1},...,\beta_{m})=0$.\\

In fact, 
\begin{center} $P(\eta_{1},...,\eta_{n},\beta_{1},...,\beta_{m})=\sum\limits_{i=1}^{n}a_{i}\eta_{i}Q_{F_{i}}(\beta_{1},...,\beta_{m})=$\\$\sum\limits_{i=1}^{n}a_{i}\alpha_{i}(\prod\limits_{j\notin F_{i}}\beta_{j})(\prod\limits_{j\in F_{i}}\beta_{j})=\sum\limits_{i=1}^{n}a_{i}\alpha_{i}(\prod\limits_{j=1}^{m}\beta_{j})=(\prod\limits_{j=1}^{m}\beta_{j})\sum\limits_{i=1}^{n}a_{i}\alpha_{i}=0$.\end{center}
This shows that, if we set $x_{i}=\eta_{i}$ for $i=1,...,n$ and $y_{j}=\beta_{j}$ for $j=1,...,m$, we have an injective solution to $P(x_{1},...,x_{n},y_{1},...,y_{m})$ in $G_{\U}$.\end{proof}
Three observations: 
\begin{enumerate}
	\item as a consequence of the argument used to prove the theorem, the ultrafilter $\U$ considered in the proof is both a $\iota_{P}$-ultrafilter and a $\iota_{R}$-ultrafilter;
	\item we observe that some of the variables $y_{1},...,y_{m}$ may appear in more than a monomial: e.g., the polynomial \begin{center}$P(x_{1},x_{2},x_{3},x_{4},x_{5},y_{1},y_{2},y_{3})=x_{1}y_{1}y_{2}+4x_{2}y_{1}y_{2}y_{3}-3x_{3}y_{3}-2x_{4}y_{1}+x_{5}$\end{center} satisfies the hypothesis of Theorem \ref{lev}, so it is injectively partition regular;
	\item Theorem \ref{ConsHind} is a particular case of Theorem \ref{lev}.
\end{enumerate}
Theorem \ref{lev} can be reformulated in a way that leads to the generalization given by Theorem \ref{NLP}:
\begin{defn} Let $$P(x_{1},...,x_{n})=\sum\limits_{i=1}^{k} a_{i}M_{i}(x_{1},...,x_{n})$$ be a polynomial and let
$M_{1}(x_{1},...,x_{n}),...,M_{k}(x_{1},...,x_{n})$ be the distinct monic monomials of $P(x_{1},...,x_{n})$. We say that $\{v_{1},...,v_{k}\}\subseteq V(P)$ is a {\bfseries set of exclusive variables} for $P(x_{1},...,x_{n})$ if, for every $i,j\leq k$, $d_{M_{i}}(v_{j})\geq 1\Leftrightarrow i=j$.\\
In this case we say that the variable $v_{i}$ is {\bfseries exclusive} for the monomial $M_{i}(x_{1},...,x_{n})$ in $P(x_{1},...,x_{n})$.\end{defn}
E.g., the polynomial $P(x,y,z,t,w): xyz+yt-w$ admits $\{x,t,w\}$ or $\{z,t,w\}$ as sets of exlusive variables, while the polynomial $P(x,y,z): xy+yz-xz$ does not have any exclusive variable.
\begin{defn} Let $$P(x_{1},...,x_{n})=\sum\limits_{i=1}^{k} a_{i}M_{i}(x_{1},...,x_{n})$$ be a polynomial, and let
$M_{1}(x_{1},...,x_{n}),...,M_{k}(x_{1},...,x_{n})$ be the distinct monic monomials of $P(x_{1},...,x_{n})$. We call {\bfseries reduct of $P$} $($notation Red$(P))$ the polynomial:
	\begin{center} Red$(P)(y_{1},...,y_{k})=\sum\limits_{i=1}^{k} a_{i}y_{i}$. \end{center}\end{defn}
E.g., if $P(x,y,z,t,w)$ is the polynomial $x_{1}x_{2}+4x_{2}x_{3}-2x_{4}+x_{2}x_{5}$, then 
\begin{center} Red($P$)$(y_{1},y_{2},y_{3},y_{4})=y_{1}+4y_{2}-2y_{3}+y_{4}$.\end{center}
As a consequence of Rado's Theorem, we have that $P(x_{1},...,x_{n})$ satisfies Rado's condition if and only if $Red(P)$ is partition regular.
As a consequence of Theorem \ref{lev}, we obtain the following result:
\begin{cor} Let $n\geq 3$, $k\geq n$ be natural numbers and let $$P(x_{1},...,x_{n})=\sum\limits_{i=1}^{k}a_{i}M_{i}(x_{1},...,x_{n})$$ be a l.e.v. polynomial. We suppose that $P(x_{1},...,x_{n})$ admits a set of exclusive variables and that it satisfies Rado's Condition. Then $P(x_{1},...,x_{n})$ is an injectively partition regular polynomial.
\end{cor}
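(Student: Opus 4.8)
The plan is to reduce the statement to Theorem~\ref{lev} by separating the exclusive variables from the others. First I would fix a set of exclusive variables $\{v_{1},\dots,v_{k}\}$ for $P$, with $v_{i}$ exclusive for the monomial $M_{i}$, and relabel the remaining variables of $P$ as $y_{1},\dots,y_{m}$, so that $V(P)=\{v_{1},\dots,v_{k},y_{1},\dots,y_{m}\}$ and $m=n-k$; by the standing convention each $y_{j}$ actually occurs in $P$, so this relabelling makes sense. Because $P$ is l.e.v., every variable has degree at most one in each $M_{i}$, and combining this with the defining property of exclusive variables ($d_{M_{i}}(v_{j})\ge 1\Leftrightarrow i=j$) forces $d_{M_{i}}(v_{i})=1$, $d_{M_{i}}(v_{\ell})=0$ for $\ell\ne i$, and $d_{M_{i}}(y_{j})\in\{0,1\}$. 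Putting $F_{i}=\{\,j\le m:d_{M_{i}}(y_{j})=1\,\}\subseteq\{1,\dots,m\}$, I would conclude that $M_{i}=v_{i}\cdot Q_{F_{i}}(y_{1},\dots,y_{m})$ for each $i\le k$, and hence
\begin{equation*}
P=\sum_{i=1}^{k}a_{i}\,v_{i}\,Q_{F_{i}}(y_{1},\dots,y_{m}).
\end{equation*}

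Next I would observe that the linear polynomial $R(v_{1},\dots,v_{k}):=\sum_{i=1}^{k}a_{i}v_{i}$ is precisely $\mathrm{Red}(P)$, so that the hypothesis ``$P$ satisfies Rado's Condition'' is, by the remark stated just before this corollary, equivalent to ``$\mathrm{Red}(P)$ is partition regular''; thus $R$ is partition regular. Now $P$ has exactly the form to which Theorem~\ref{lev} applies: the variables $x_{1},\dots,x_{n}$ occurring there are taken to be our $v_{1},\dots,v_{k}$, with the same coefficients $a_{i}$ and the same index sets $F_{i}$. Since $k\ge 3$, the side condition in Theorem~\ref{lev} — which is imposed only when the linear polynomial has exactly two variables — is automatically satisfied, so Theorem~\ref{lev} applied to $R$ and to $F_{1},\dots,F_{k}$ gives immediately that $P$ is injectively partition regular. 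The one degenerate situation, namely $m=0$ (every variable of $P$ is exclusive, so each $M_{i}=v_{i}$), I would handle separately: there $P=\mathrm{Red}(P)$ is itself a partition regular \emph{linear} polynomial, hence injectively partition regular by the result of Hindman and Leader recalled in Section 2.1, and Theorem~\ref{lev} — which requires $m\ge 1$ — is not needed.

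I do not expect a real obstacle: all the combinatorial content lives in Theorem~\ref{lev}, and the work here is purely formal. The only point that needs to be written out with some care is the first paragraph — checking that the l.e.v.\ hypothesis together with the exclusivity relations genuinely pins each monomial down as $M_{i}=v_{i}Q_{F_{i}}(y_{1},\dots,y_{m})$, and that the relabelling of the non-exclusive variables yields well-defined index sets $F_{i}\subseteq\{1,\dots,m\}$. Once that decomposition is in place, the rest is an immediate invocation of Theorem~\ref{lev} (plus the trivial $m=0$ case).
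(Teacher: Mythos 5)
Your proof is correct and is essentially the paper's own argument: use the l.e.v.\ property together with the exclusivity relations to write each monomial as $v_{i}\,Q_{F_{i}}(y_{1},\dots,y_{n-k})$, note that Rado's Condition makes $\sum_{i=1}^{k}a_{i}v_{i}=\mathrm{Red}(P)$ partition regular, and invoke Theorem~\ref{lev}, treating the linear case $k=n$ separately via Rado's Theorem. (Like the paper's proof, you are implicitly reading the hypothesis ``$k\geq n$'' as a typo for ``$n\geq k$'' --- forced anyway, since a set of exclusive variables has $k$ elements inside the $n$-element set $V(P)$ --- and your appeal to $k\geq 3$ to discharge the side condition of Theorem~\ref{lev} could, if $k=2$ were allowed, be replaced by noting that $n\geq 3>k$ forces $F_{1}\cup F_{2}\neq\emptyset$.)
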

\begin{proof} If $n=k$, the polynomial is linear and the thesis follows by Theorem \ref{Rado}. So we can suppose that $k>n$. By reordering, if necessary, we can suppose that, for $j=1,...,k$, the variable $x_{j}$ is exclusive for the monomial $M_{j}(x_{1},...,x_{n})$. Then, by Rado's condition, the polynomial
\begin{center} $\sum\limits_{i=1}^{k}a_{i}x_{i}$ \end{center}
is partition regular. If $F=\{1,...,n-k\}$, for $i\leq k$ we set
\begin{center} $F_{i}=\{j\in F\mid x_{j+k}$ divides $M_{i}(x_{1},...,x_{n})\}.$\end{center}
Then if we set, for $j\leq n-k$, $y_{j}=x_{i+k}$, the polynomial $P(x_{1},...,x_{n})$ is, by renaming the variables, equal to
\begin{center} $\sum\limits_{i=1}^{k}a_{i}x_{i}Q_{F_{i}}(y_{1},...,y_{n-k})$. \end{center}
By Theorem \ref{lev} the above polynomial is injectively partition regular, so we have the thesis.\end{proof}
Corollary 3.7 talks about l.e.v. polynomials; in section 4 we show that there are also non l.e.v. polynomials that are partition regular, provided that they have "enough exclusive variables" in each monomial.
\section{Partition Regularity for a Class of Nonlinear Polynomials}
In this section we want to extend Theorem \ref{lev} to a particular class of nonlinear polynomials. To introduce our main result, we need the following notations:
\begin{defn} Let $P(x_{1},...,x_{n})=\sum\limits_{i=1}^{k} a_{i}M_{i}(x_{1},...,x_{n})$ be a polynomial, and let $M_{1}(x_{1},...,x_{n}),...,M_{k}(x_{1},...,x_{n})$ be the monic monomials of $P(x_{1},...,x_{n})$. Then
	\begin{itemize}
	\item {\bfseries NL(P)}=$\{x\in V(P)\mid d(x)\geq 2\}$ is the set of nonlinear variables of $P(x_{1},...,x_{n})$;
	\item for every $i\leq k$, $\mathbf{l_{i}}=\max\{d(x)-d_{i}(x)\mid x\in NL(P)\}$. 
	\end{itemize}
\end{defn}
\begin{thm}\label{NLP} Let $$P(x_{1},...,x_{n})=\sum\limits_{i=1}^{k} a_{i}M_{i}(x_{1},...,x_{n})$$ be a polynomial, and let $M_{1}(x_{1},...,x_{n})$,...,$M_{k}(x_{1},...,x_{n})$ be the monic monomials of $P(x_{1},...,x_{n})$. We suppose that $k\geq 3$, that $P(x_{1},...,x_{n})$ satisfies Rado's Condition and that, for every index $i\leq k$, in the monomial $M_{i}(x_{1},...,x_{n})$ there are at least $m_{i}=\max\{1,l_{i}\}$ exclusive variables with degree equal to 1.\\
Then $P(x_{1},...,x_{n})$ is injectively partition regular. \end{thm}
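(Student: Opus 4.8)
The plan is to run the argument of Theorem~\ref{lev} with the linear polynomial $R$ there replaced by the reduct $Red(P)=\sum_{i=1}^{k}a_{i}y_{i}$. Since $P$ satisfies Rado's Condition, $Red(P)$ is partition regular; it is linear, it is different from $x-y$ (as $k\ge 3$), hence it is injectively partition regular by \cite{rif11}, and being linear it is homogeneous. Theorem~\ref{idultra} therefore provides a nonprincipal multiplicative idempotent $\iota_{Red(P)}$-ultrafilter $\U$, and by the Reduction Lemma applied to $Red(P)$ we may fix mutually distinct height-$1$ generators $\gamma_{1},\dots,\gamma_{k}\in G_{\U}$ with $\sum_{i=1}^{k}a_{i}\gamma_{i}=0$. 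Fix also a height-$1$ generator $\delta$ of $\U$. The one algebraic fact used throughout is that, because $\U$ is a multiplicative idempotent, any product of generators of $\U$ taken at strictly increasing heights is again a generator of $\U$; in particular $S_{m_{1}}(\delta)\cdots S_{m_{t}}(\delta)\in G_{\U}$ and $\gamma_{i}\cdot S_{m_{1}}(\delta)\cdots S_{m_{t}}(\delta)\in G_{\U}$ whenever $1\le m_{1}<\dots<m_{t}$ (iterate Proposition~\ref{sum}(3) together with Proposition~\ref{idultragen}).

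By the Polynomial Bridge Theorem it suffices to produce mutually distinct elements of $G_{\U}$ that solve $P=0$. I look for an assignment of a value in $G_{\U}$ to each variable of $P$ under which every monomial $M_{i}$ takes the value $T\gamma_{i}$ for one fixed hyperinteger $T$; then $P(\text{values})=\sum_{i}a_{i}T\gamma_{i}=T\sum_{i}a_{i}\gamma_{i}=0$. To this end I assign to each nonlinear variable $u\in NL(P)$ the value $S_{m_{u}}(\delta)$ (with indices $m_{u}\ge 1$ pairwise distinct), to each degree-$1$ variable $w$ occurring in more than one monomial a distinct value $S_{m_{w}}(\delta)$, and I set
\[
T=\Big(\prod_{u\in NL(P)}S_{m_{u}}(\delta)^{\,d(u)}\Big)\cdot\Big(\prod_{w\text{ shared}}S_{m_{w}}(\delta)\Big)\cdot E,
\]
where $E$ is an arbitrarily long product of further generators $S_{m}(\delta)$ with fresh indices.

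Under these choices, $M_{i}(\text{values})=T\gamma_{i}$ holds precisely when the values of the degree-$1$ variables exclusive to $M_{i}$ multiply to
\[
\gamma_{i}\cdot\prod_{u\in NL(P)}S_{m_{u}}(\delta)^{\,d(u)-d_{i}(u)}\cdot\prod_{\substack{w\text{ shared}\\ w\notin M_{i}}}S_{m_{w}}(\delta)\cdot E .
\]
This is exactly where the hypothesis enters: each exponent $d(u)-d_{i}(u)$ is at most $l_{i}\le m_{i}$, and $M_{i}$ has at least $m_{i}=\max\{1,l_{i}\}$ exclusive degree-$1$ variables, so the finitely many factors $S_{m_{u}}(\delta)$ (the $u$-th with multiplicity $d(u)-d_{i}(u)$), together with $\gamma_{i}$, the factors $S_{m_{w}}(\delta)$ with $w\notin M_{i}$, and the atoms of $E$, can be partitioned into exactly as many groups as $M_{i}$ has exclusive degree-$1$ variables, with no group containing a repeated atom. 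Handing the product of the $j$-th group — an increasing-height product of generators of $\U$, hence an element of $G_{\U}$ by the fact above — to the $j$-th exclusive variable achieves $M_{i}(\text{values})=T\gamma_{i}$. Choosing $E$ with enough atoms and distributing them appropriately one arranges, as in Theorem~\ref{lev}, that the $n$ assigned values are also pairwise distinct. The identity $\sum_{i}a_{i}M_{i}(\text{values})=0$ is then immediate, and the Polynomial Bridge Theorem gives that $\U$ is a $\iota_{P}$-ultrafilter, i.e. that $P$ is injectively partition regular.

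The main obstacle I expect is the simultaneous bookkeeping in this last assignment: the heights of all the auxiliary generators $S_{m}(\delta)$ must be allocated so that every value handed to a variable is a genuine element of $G_{\U}$ — which forces it to be a product of generators at strictly increasing heights, and in particular forbids a single token $S_{m_{u}}(\delta)$ from occurring squared inside one value; this is precisely the point at which $m_{i}=\max\{1,l_{i}\}$, rather than merely $m_{i}=1$, is the correct requirement — while at the same time keeping the $n$ values pairwise distinct. Everything else (the passage to $\U$, and the check that the monomials sum to $0$) is a direct adaptation of the proof of Theorem~\ref{lev}.
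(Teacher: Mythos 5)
Your argument is correct, and it rests on the same machinery as the paper's proof: Theorem \ref{idultra} to obtain a nonprincipal multiplicative idempotent, iterated star-map images $S_{m}(\delta)$ of a single generator to populate the nonlinear variables, and the key combinatorial observation that a correction factor in which every atom occurs with multiplicity at most $m_{i}=\max\{1,l_{i}\}$ can be split into $m_{i}$ square-free blocks -- each a product of generators at pairwise distinct heights, hence itself a generator -- to be handed to the exclusive degree-one variables. The one genuine difference is the polynomial you reduce to. The paper does not pass to $Red(P)$ but to $\widetilde{P}$, obtained from $P$ by substituting $1$ for every variable of $NL(P)$: this is a l.e.v.\ polynomial with $k\geq 3$ monomials, Rado's Condition and an exclusive variable in each monomial, so Theorem \ref{lev} already supplies an idempotent $\U$ together with an injective solution $(\alpha_{i,j},\beta_{l})$ of $\widetilde{P}$ in $G_{\U}$; the degree-one variables shared among several monomials (and the surplus exclusive ones) then simply receive the values $\beta_{l}$ and need no further attention, and only the deficiencies $d(y_{j})-d_{i}(y_{j})$ of the nonlinear variables must be compensated, via the explicit layer decomposition $\gamma_{i,j}=\prod_{s\in I_{i,j}}S_{s}(\gamma)$ with $I_{i,j}=\{s\mid d(y_{s})-d_{i}(y_{s})\geq j\}$, which is one concrete instance of your square-free partition. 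By reducing all the way to the linear reduct you must instead assign ad hoc values $S_{m_{w}}(\delta)$ to the shared degree-one variables, fold them into $T$, and compensate for their absence in the remaining monomials -- in effect you re-prove Theorem \ref{lev} inside this argument. Both routes work; the paper's intermediate step keeps the bookkeeping and the injectivity check lighter, while yours displays the single mechanism behind both theorems at once. Two details worth making explicit: say how you treat degree-one variables that occur in only one monomial but are not among the designated $m_{i}$ exclusive ones (they can just be counted as extra exclusive variables, which only loosens the partition constraint), and ensure that every block handed to an exclusive variable is nonempty by padding with atoms of $E$, since the empty product $1$ is not a generator of a nonprincipal ultrafilter.
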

\begin{proof} We rename the variables in $V(P)$ in the following way: for $i\leq k$ let $x_{i,1},...,x_{i,m_{i}}$ be $m_{i}$ exclusive variables for $M_{i}(x_{1},...,x_{n})$ with degree equal to 1. We set 
\begin {center} $E=\{x_{i,j}\mid i\leq k, j\leq m_{i}\}$ \end{center}
and $NL(P)=\{y_{1},...,y_{h}\}$. Finally, we set $\{z_{1},...,z_{r}\}=V(P)\setminus (E\cup NL(P))$.\\
We suppose that the variables are ordered as to have 
\begin{center} $P(x_{1},...,x_{n})=P(x_{1,1},...,x_{1,m_{1}},x_{2,1},....,x_{k,m_{k}},z_{1},...,z_{r},y_{1},...,y_{h})$.\end{center}
We set
\begin{center} $\widetilde{P}(x_{1,1},...,z_{r})=P(x_{1,1},...,z_{r},1,1,...,1)$. \end{center}
By construction, and by hypothesis, $\widetilde{P}(x_{1,1},...,z_{r})$ is a l.e.v. polynomial with at least three monomials, it satisfies Rado's Condition and it has at least one exclusive variable for each monomial. So, by Theorem \ref{lev}, it is injectively partition regular. Let $\U$ be a multiplicative idempotent ultrafilter such that in $G_{\U}$ there is an injective solution $(\alpha_{1,1},...,\alpha_{k,m_{k}},\beta_{1},...,\beta_{r})$ to $\widetilde{P}(x_{1,1},...,z_{r})$.\\		
Let $\gamma$ be an element in $G_{\U}\setminus\{\alpha_{1,1},...,\alpha_{k,m_{k}},\beta_{1},...,\beta_{r}\}$.\\
We consider 
\begin{center} $\eta=\prod\limits_{i=1}^{h} S_{i}(\gamma)^{d(y_{i})}$. \end{center}
For $i=1,...,k$ we set $M^{NL}_{i}=\prod\limits_{j=1}^{h} S_{j}(\gamma)^{d_{i}(y_{j})}$ and 
\begin{center} $\eta_{i}=\frac{\eta}{M^{NL}_{i}}=\prod\limits_{j=1}^{h}S_{j}(\gamma)^{d(y_{j})-d_{i}(y_{j})}$. \end{center}
We observe that the maximum degree of an element $S_{j}(\gamma)$ in $\eta_{i}$ is, by construction, $l_{i}$.\\
Finally, for $1\leq j\leq m_{i}$, we set $I_{i,j}=\{s\leq h\mid d(y_{s})-d_{i}(y_{s})\geq j\}$ and 
\begin{center} $\gamma_{i,j}=\prod\limits_{s\in I_{i,j}} S_{s}(\gamma)$. \end{center}
With these choices, we have 
\begin{center} $\prod\limits_{j=1}^{m_{i}} \gamma_{i,j}=\eta_{i}$ \end{center}
and, by construction, $\{\gamma_{i,j}\mid i\leq k, j\leq m_{i}\}\subseteq G_{\U}$ since $\U$ is a multiplicative idempotent.\\
We also observe that, for every $i\leq k$, $\left(\prod\limits_{j=1}^{m_{i}}\gamma_{i,j}\right)\cdot M^{NL}_{i}=\eta$.\\
Now, if we set, for $i\leq k$ and $j\leq m_{i}$:
\begin{equation*} x_{i,j}= \begin{cases} \alpha_{i,j}\cdot\gamma_{i,j} & \mbox{if}  \ l_{i}\geq 1;\\
 \\
                  \alpha_{i,j} & \mbox{if} \ l_{i}=0; \end{cases}\end{equation*}
and
\begin{itemize}                 
\item $y_{i}=S_{i}(\gamma)$ for $i\leq h$;
\item $z_{i}=\beta_{i}$ for $i\leq r$
\end{itemize}
then  
\begin{center} $P(x_{1,1},...,x_{k,m_{k}},z_{1},...,z_{r},y_{1},...,y_{h})=$\\\vspace{0.3cm}$\eta\cdot\widetilde{P}(\alpha_{1,1},...,\alpha_{k,m_{k}},\beta_{1},...,\beta_{r},1,...,1)=0,$\end{center} 
so $P(x_{1,1},...,y_{h})$ is injectively partition regular. \\\end{proof}
In order to understand the requirement $k\geq 3$, we observe that one of the crucial points in the proof is that, when we set $y=1$ for every $y\in NL(P)$, the polynomial $\widetilde{P}(x_{1,1},...,z_{r})$ that we obtain is injectively partition regular. Now, let us suppose that $k=2$, and let $M_{1}(x_{1},...,x_{n})$ and $M_{2}(x_{1},...,x_{n})$ be the two monic monomials of $P(x_{1},...,x_{n})$. If $D(x_{1},...,x_{n})$ is the greatest common divisor of $M_{1}(x_{1},...,x_{n})$, $M_{2}(x_{1},...,x_{n})$, we set
\begin{equation*} Q_{i}(x_{1},...,x_{n})=\frac{M_{i}(x_{1},...,x_{n})}{D(x_{1},...,x_{n})} \end{equation*}
for $i=1,2$. We have
\begin{equation*} P(x_{1},...,x_{n})=D(x_{1},...,x_{n})(Q_{1}(x_{1},...,x_{n})-Q_{2}(x_{1},...,x_{n})), \end{equation*}
and it holds that $P(x_{1},...,x_{n})$ is injectively partition regular if and only if $R(x_{1},...,x_{n})=Q_{1}(x_{1},...,x_{n})-Q_{2}(x_{1},...,x_{n})$ is, since $D(x_{1},...,x_{n})$ is a nonzero monomial.\\
Now there are two possibilities: 
\begin{enumerate}
	\item $NL(R)\neq\emptyset$, in which case, since every $y\in NL(R)$ divides $Q_{1}(x_{1},...,x_{n})$ if and only if it does not divide $Q_{2}(x_{1},...,x_{n})$ (this property holds because, by construction, $Q_{1}(x_{1},...,x_{n})$ and $Q_{2}(x_{1},...,x_{n})$ are relatively prime), in at least one of the monomials there are at least two exclusive variables, and this entails that the polynomial $\widetilde{R}(x_{1,1},...,z_{r})$ is injectively partition regular by Theorem \ref{lev};
	\item $NL(R)=\emptyset$, in which case $R(x_{1},...,x_{n})$ is a l.e.v. polynomial with only two monomials, so it is injectively partition regular if and only if $n\geq 3$.
\end{enumerate}
By the previous discussion (and using the same notations) it follows that, when $k=2$, if the other hypothesis of Theorem \ref{NLP} hold then the polynomial $P(x_{1},...,x_{n})$ is injectively partition regular if and only if there do not exist two variables $x_{i},x_{j}\in V(P)$ such that $R(x_{1},...,x_{n})=x_{i}-x_{j}$.\\
We conclude this section by showing with an example how the proof of Theorem \ref{NLP} works. Consider the polynomial
 \begin{center} $P(x_{1,1},x_{2,1},x_{2,2},x_{3,1},x_{3,2},x_{4,1},x_{4,2},z_{1},z_{2},y_{1},y_{2})=$\\\vspace{0.2cm} $x_{1,1}y_{1}^{2}y_{2}^{2}+x_{2,1}x_{2,2}z_{1}y_{2}^{2}-2x_{3,1}x_{3,2}z_{2}y_{1}+x_{4,1}x_{4,2}$,\end{center}
where we have chosen the names of the variables following the notations introduced in the proof of Theorem \ref{NLP}.\\
We set 
\begin{center} $\widetilde{P}(x_{1,1},...,x_{4,2},z_{1},z_{2})=x_{1,1}+x_{2,1}x_{2,2}z_{1}-2x_{3,1}x_{3,2}z_{2}+x_{4,1}x_{4,2}$.\end{center} Let $\U$ be a multiplicative idempotent $\iota_{\widetilde{P}}$-ultrafilter and let $\alpha_{1,1},...,\alpha_{4,2},\beta_{1},\beta_{2}\in$$^{*}\N$ be mutually distinct elements in $G_{\U}$ such that 
\begin{center} $\alpha_{1,1}+\alpha_{2,1}\alpha_{2,2}\beta_{1}-2\alpha_{3,1}\alpha_{3,2}\beta_{2}+\alpha_{4,1}\alpha_{4,2}$. \end{center}
We take $\gamma\in G_{\U}\setminus\{\alpha_{1,1},...,\alpha_{4,2},\beta_{1},\beta_{2}\}$ and we set:
\begin{center} $\gamma_{2,1}=\gamma_{2,2}=$$^{*}\gamma$, $\gamma_{3,1}=$$^{*}\gamma$$^{**}\gamma$, $\gamma_{3,2}=$$^{**}\gamma$, $\gamma_{4,1}=\gamma_{4,2}=$$^{*}\gamma$$^{**}\gamma$.\end{center}
Finally, we set:
\begin{itemize}
\item $x_{1,1}=\alpha_{1,1}$;
\item $x_{2,1}=\alpha_{2,1}\cdot \gamma_{2,1}$;
\item $x_{2,2}=\alpha_{2,2}\cdot\gamma_{2,2}$;
\item $x_{3,1}=\alpha_{3,1}\cdot\gamma_{3,1}$;
\item $x_{3,2}=\alpha_{3,2}\cdot\gamma_{3,2}$;
\item $x_{4,1}=\alpha_{4,1}\cdot\gamma_{4,1}$;
\item $x_{4,2}=\alpha_{4,2}\cdot\gamma_{4,2}$;
\item $z_{1}=\beta_{1}$;
\item $z_{2}=\beta_{2}$;
\item $y_{1}=$$^{*}\gamma$;
\item $y_{2}=$$^{**}\gamma$.
\end{itemize}
With these choices, we have:
\begin{center} $P(x_{1,1},x_{2,1},x_{2,2},x_{3,1},x_{3,2},x_{4,1},x_{4,2},z_{1},z_{2},y_{1},y_{2})=$\\\vspace{0.2cm}$\alpha_{1,1}\cdot$$^{*}\gamma^{2}\cdot$$^{**}\gamma^{2}+\alpha_{2,1}\alpha_{2,2}\beta_{1}$$^{*}\gamma^{2}$$^{**}\gamma^{2}-2\alpha_{3,1}\alpha_{3,2}\beta_{2}$$^{*}\gamma^{2}$$^{**}\gamma^{2}+\alpha_{4,1}\alpha_{4,2}$$^{*}\gamma^{2}$$^{**}\gamma^{2}=$\\\vspace{0.2cm}$^{*}\gamma^{2}$$^{**}\gamma^{2}\widetilde{P}(\alpha_{1,1},\alpha_{2,1},\alpha_{2,2},\alpha_{3,1},\alpha_{3,2},\alpha_{4,1},\alpha_{4,2},\beta_{1},\beta_{2})=0$, \end{center}
so $P(x_{1,1},x_{2,1},x_{2,2},x_{3,1},x_{3,2},x_{4,1},x_{4,2},z_{1},z_{2},y_{1},y_{2})$ has an injective solution in $G_{\U}$, and this entails that $P(x_{1,1},x_{2,1},x_{2,2},x_{3,1},x_{3,2},x_{4,1},x_{4,2},z_{1},z_{2},y_{1},y_{2})$ is injectively partition regular.
\section{Conclusions}
A natural question is the following: can the implications in Theorem \ref{lev} and/or Theorem \ref{NLP} be reversed? The hypothesis on the existence of exclusive variables is not necessary: in \cite{rif6} it is proved that the polynomial
\begin{center} $P(x,y,z)=xy+xz-yz$ \end{center}
is partition regular (it can be proved that it is injectively partition regular), and it does not admit a set of exclusive variables. The hypothesis regarding Rado's Condition is more subtle: by slightly modifying the original arguments of Richard Rado (that can be found, for example, in \cite{rif10}) we can prove that this hypothesis is necessary for every homogeneous partition regular polynomial, but it seems to be not necessary in general. For sure, it is not necessary if we ask for the partition regularity of polynomials on $\Z$: in fact, e.g., the polynomial
\begin{center} $P(x_{1},x_{2},x_{3},y_{1},y_{2})=x_{1}y_{1}+x_{2}y_{2}+x_{3}$ \end{center}
is injectively partition regular on $\Z$ even if it does not satisfy Rado's Condition. This can be easily proved in the following way: the polynomial
\begin{center} $R(x_{1},x_{2},x_{3},y_{1},y_{2})=x_{1}y_{1}+x_{2}y_{2}-x_{3}$ \end{center} 
is, by Theorem \ref{lev}, injectively partition regular on $\N$ and, if $\mathfrak{U}_{\alpha}$ is a $\iota_{R}$-ultrafilter, then $\mathfrak{U}_{-\alpha}$ is a $\iota_{P}$-ultrafilter; in fact, if $\alpha_{1},\alpha_{2},\alpha_{3},\beta_{1},\beta_{2}$ are elements in $G_{\mathfrak{U}_{\alpha}}$ such that $R(\alpha_{1},\alpha_{2},\alpha_{3},\beta_{1},\beta_{2})=0$, then $-\alpha_{1},-\alpha_{2},-\alpha_{3},-\beta_{1},-\beta_{2}$ are elements in $G_{\mathfrak{U}_{-\alpha}}$ and, by construction, 
\begin{center} $P(-\alpha_{1},-\alpha_{2},-\alpha_{3},-\beta_{1},-\beta_{2})=0$.\end{center}
Furthermore, the previous example also shows that, while in the homogeneous case every polynomial which is partition regular on $\Z$ is also partition regular on $\N$, in the non homogeneous case this is false, since $P(x_{1},x_{2},x_{3},y_{1},y_{2})$, having only positive coefficients, can not be partition regular on $\N$ (it does not even have any solution in $\N)$.\\
Finally, Rado's Condition alone is not sufficient to ensure the partition regularity of a nonlinear polynomial: in \cite{rif6} the authors proved that the polynomial 
\begin{center} $x+y-z^{2}$ \end{center}
is not partition regular on $\N$, even if it satisfies Rado's Condition.\\
We conclude the paper summarizing the previous observations in two questions:\\

{\bfseries Question 1:} Is there a characterization of nonlinear partition regular polynomials on $\N$ in "Rado's Style", i.e. that allows to determine in a finite time if a given polynomial $P(x_{1},...,x_{n})$ is, or is not, partition regular?\\

Question 1 seems particularly challenging; an easier question, that would still be interesting to answer, is the following:\\

{\bfseries Question 2:} Is there a characterization of homogeneous partition regular polynomials (in the same sense of Question 1)? 
\section{Acknowledgements} 
I would like to thank Professor Mauro di Nasso for many reasons: first of all, I became interested in problems regarding combinatorial number theory under his supervision; the ideas behind the techniques exposed in this work were originated by his idea of characterizing properties of ultrafilters thinking about them as nonstandard points (e.g., the characterization of idempotent ultrafilters given in Proposition \ref{idultragen}); finally, he gave me many useful comments regarding the earlier draft of the paper.

\end{document}